\documentclass[twoside,leqno,10pt, A4]{amsart}
\usepackage{amsfonts}
\usepackage{amsmath}
\usepackage{amscd}
\usepackage{amssymb}
\usepackage{amsthm}
\usepackage{amsrefs}
\usepackage{latexsym}
\usepackage{mathrsfs}
\usepackage{bbm}
\usepackage{amscd}
\usepackage{amssymb}
\usepackage{amsthm}
\usepackage{amsrefs}
\usepackage{latexsym}
\usepackage{mathrsfs}
\usepackage{bbm}
\usepackage{enumerate}
\usepackage{graphicx}
\usepackage{color}
\setlength{\textwidth}{18.2cm}
\setlength{\oddsidemargin}{-0.7cm}
\setlength{\evensidemargin}{-0.7cm}
\setlength{\topmargin}{-0.7cm}
\setlength{\headheight}{0cm}
\setlength{\headsep}{0.5cm}
\setlength{\topskip}{0cm}
\setlength{\textheight}{23.9cm}
\setlength{\footskip}{.5cm}

\begin{document}

\newtheorem{theorem}[subsection]{Theorem}
\newtheorem{proposition}[subsection]{Proposition}
\newtheorem{lemma}[subsection]{Lemma}
\newtheorem{corollary}[subsection]{Corollary}
\newtheorem{conjecture}[subsection]{Conjecture}
\newtheorem{prop}[subsection]{Proposition}
\newtheorem{defin}[subsection]{Definition}

\numberwithin{equation}{section}
\newcommand{\mr}{\ensuremath{\mathbb R}}
\newcommand{\mc}{\ensuremath{\mathbb C}}
\newcommand{\dif}{\mathrm{d}}
\newcommand{\intz}{\mathbb{Z}}
\newcommand{\ratq}{\mathbb{Q}}
\newcommand{\natn}{\mathbb{N}}
\newcommand{\comc}{\mathbb{C}}
\newcommand{\rear}{\mathbb{R}}
\newcommand{\prip}{\mathbb{P}}
\newcommand{\uph}{\mathbb{H}}
\newcommand{\fief}{\mathbb{F}}
\newcommand{\majorarc}{\mathfrak{M}}
\newcommand{\minorarc}{\mathfrak{m}}
\newcommand{\sings}{\mathfrak{S}}
\newcommand{\fA}{\ensuremath{\mathfrak A}}
\newcommand{\mn}{\ensuremath{\mathbb N}}
\newcommand{\mq}{\ensuremath{\mathbb Q}}
\newcommand{\half}{\tfrac{1}{2}}
\newcommand{\f}{f\times \chi}
\newcommand{\summ}{\mathop{{\sum}^{\star}}}
\newcommand{\chiq}{\chi \bmod q}
\newcommand{\chidb}{\chi \bmod db}
\newcommand{\chid}{\chi \bmod d}
\newcommand{\sym}{\text{sym}^2}
\newcommand{\hhalf}{\tfrac{1}{2}}
\newcommand{\sumstar}{\sideset{}{^*}\sum}
\newcommand{\sumprime}{\sideset{}{'}\sum}
\newcommand{\sumprimeprime}{\sideset{}{''}\sum}
\newcommand{\sumflat}{\sideset{}{^\flat}\sum}
\newcommand{\shortmod}{\ensuremath{\negthickspace \negthickspace \negthickspace \pmod}}
\newcommand{\V}{V\left(\frac{nm}{q^2}\right)}
\newcommand{\sumi}{\mathop{{\sum}^{\dagger}}}
\newcommand{\mz}{\ensuremath{\mathbb Z}}
\newcommand{\leg}[2]{\left(\frac{#1}{#2}\right)}
\newcommand{\muK}{\mu_{\omega}}
\newcommand{\thalf}{\tfrac12}
\newcommand{\lp}{\left(}
\newcommand{\rp}{\right)}
\newcommand{\Lam}{\Lambda_{[i]}}
\newcommand{\lam}{\lambda}
\newcommand{\af}{\mathfrak{a}}
\newcommand{\sw}{S_{[i]}(X,Y;\Phi,\Psi)}
\newcommand{\lz}{\left(}
\newcommand{\pz}{\right)}
\newcommand{\bfrac}[2]{\lz\frac{#1}{#2}\pz}
\newcommand{\odd}{\mathrm{\ primary}}
\newcommand{\even}{\text{ even}}
\newcommand{\res}{\mathrm{Res}}
\newcommand{\sumn}{\sumstar_{(c,1+i)=1}  w\left( \frac {N(c)}X \right)}
\newcommand{\lab}{\left|}
\newcommand{\rab}{\right|}
\newcommand{\Go}{\Gamma_{o}}
\newcommand{\Ge}{\Gamma_{e}}
\newcommand{\M}{\widehat}
\def\su#1{\sum_{\substack{#1}}}

\theoremstyle{plain}
\newtheorem{conj}{Conjecture}
\newtheorem{remark}[subsection]{Remark}

\newcommand{\pfrac}[2]{\left(\frac{#1}{#2}\right)}
\newcommand{\pmfrac}[2]{\left(\mfrac{#1}{#2}\right)}
\newcommand{\ptfrac}[2]{\left(\tfrac{#1}{#2}\right)}
\newcommand{\pMatrix}[4]{\left(\begin{matrix}#1 & #2 \\ #3 & #4\end{matrix}\right)}
\newcommand{\ppMatrix}[4]{\left(\!\pMatrix{#1}{#2}{#3}{#4}\!\right)}
\renewcommand{\pmatrix}[4]{\left(\begin{smallmatrix}#1 & #2 \\ #3 & #4\end{smallmatrix}\right)}
\def\en{{\mathbf{\,e}}_n}

\newcommand{\ppmod}[1]{\hspace{-0.15cm}\pmod{#1}}
\newcommand{\ccom}[1]{{\color{red}{Chantal: #1}} }
\newcommand{\acom}[1]{{\color{blue}{Alia: #1}} }
\newcommand{\alexcom}[1]{{\color{green}{Alex: #1}} }
\newcommand{\hcom}[1]{{\color{brown}{Hua: #1}} }

\makeatletter
\def\widebreve{\mathpalette\wide@breve}
\def\wide@breve#1#2{\sbox\z@{$#1#2$}%
     \mathop{\vbox{\m@th\ialign{##\crcr
\kern0.08em\brevefill#1{0.8\wd\z@}\crcr\noalign{\nointerlineskip}%
                    $\hss#1#2\hss$\crcr}}}\limits}
\def\brevefill#1#2{$\m@th\sbox\tw@{$#1($}%
  \hss\resizebox{#2}{\wd\tw@}{\rotatebox[origin=c]{90}{\upshape(}}\hss$}
\makeatletter

\title[Shifted moments of quadratic Dirichlet $L$-functions]{Shifted moments of quadratic Dirichlet $L$-functions}

\author[P. Gao]{Peng Gao}
\address{School of Mathematical Sciences, Beihang University, Beijing 100191, China}
\email{penggao@buaa.edu.cn}

\author[L. Zhao]{Liangyi Zhao}
\address{School of Mathematics and Statistics, University of New South Wales, Sydney NSW 2052, Australia}
\email{l.zhao@unsw.edu.au}

\begin{abstract}
We establish sharp upper bounds for shifted moments of quadratic Dirichlet $L$-function under the generalized Riemann hypothesis.  Our result is then used to
prove bounds for moments of quadratic Dirichlet character sums.
\end{abstract}

\maketitle

\noindent {\bf Mathematics Subject Classification (2010)}: 11L40, 11M06  \newline

\noindent {\bf Keywords}:  quadratic Dirichlet characters, quadratic Dirichlet $L$-functions, shifted moments

\section{Introduction}\label{sec 1}

   Moments of families of $L$-functions have been an important subject of study in number theory for their deep arithmetic implications.  Building on the connections with random matrix theory, conjectural asymptotic formulas, due to J. P. Keating and N. C. Snaith \cite{Keating-Snaith02}, are available for moments of various families of $L$-functions.  More precise predictions on the asymptotic behaviors of these moments with lower order terms are conjectured by J. B. Conrey, D. W. Farmer, J. P. Keating, M. O. Rubinstein and N. C. Snaith \cite{CFKRS}. \newline

    In \cite{Sound2009}, K. Soundararajan developed a method that makes attainable the predicted upper bounds for moments of $L$-functions under the generalized Riemann hypothesis (GRH). This strategy was further refined by A. J. Harper \cite{Harper} in obtaining sharp upper bounds for moments of $L$-functions conditionally. Subsequent works on upper bounds for moments of families of $L$-functions using the above approach of Soundararajan and Harper include \cites{S&Y,QShen,Szab, Sono14, Sono16, Munsch17, Curran, Chandee11, NSW}. Note that, rather than focusing only on moments of $L$-functions at the central point, most of the above-mentioned works investigate shifted moments of $L$-function at points on the critical line.  In fact, as pointed out in \cites{Chandee11, S&Y}, this will allow one to understand the correlation between the values of $L$-functions on the critical line. Moreover, it is shown in \cites{Munsch17, Szab} that these shifted moments can be applied to obtain bounds for moments of character sums. In particular, B. Szab\'o \cite[Theorem 3]{Szab} proved, via the above approach, that for any real $k>2$, any large integer $q$ and $y \in \rear$ with $2 \leq y \leq q^{1/2}$,
\begin{align*}
   \sum_{\chi\in X_q^*}\bigg|\sum_{n\leq y} \chi(n)\bigg|^{2k} \ll_k \varphi(q) y^k(\log y)^{(k-1)^2}.
\end{align*}
Here $X_q^*$ denotes the set of primitive Dirichlet characters modulo $q$ and $\varphi(q)$ Euler's totient function. \newline

   Another interesting application concerns with moments of quadratic Dirichlet character sums. In this case, a conjecture of M. Jutila \cite{Jutila2} asserts that for any positive integer $m$, there are constants $c_1(m)$, $c_2(m)$, with values depending on $m$ only, such that
\begin{align}
\label{genJacobibound}
   \sum_{\substack {\chi \in \mathcal S(X) }} \big| \sum_{n \leq Y} \chi(n) \big|^{2m} \leq c_1(m)XY^m(\log X)^{c_2(m)},
\end{align}
  where $\mathcal S(X)$ stands for the set of all non-principal quadratic Dirichlet characters of modulus at most $X$. \newline

  In \cites{Jutila1}, Julita established \eqref{genJacobibound} for $m=1$ with $c_2(m)=8$. This was later improved to by M. V. Armon \cite[Theorem 2]{Armon} who showed that we can take $c_2(m) = 1$. Other related bounds can be found in \cites{MVa2, Virtanen}. \newline

  In \cite{G&Zhao2024}, the authors confirmed a smoothed version of the above conjecture of Jutila under GRH, applying upper bounds on moments of quadratic Dirichlet $L$-functions. More precisely, we showed that for large $X$, $Y$ and any real $m \geq 1/2$,
\begin{align}
\label{Smoothdef}
  \sumstar_{\substack{d \leq X \\ (d,2)=1}}\Big | \sum_{n}\chi^{(8d)}(n)W \Big(\frac nY \Big )\Big |^{2m} \ll XY^m(\log X)^{m(2m+1)}.
\end{align}
Here $\sum^*$ stands for the sum over square-free integers through out the paper, $W$ is any non-negative, smooth function compactly supported on the set of positive real numbers and $\chi^{(8d)}$ denotes the Jacobi symbol $\leg {8d}{\cdot}$. Note that (see \cite{sound1}) the character  $\chi^{(8d)}$ is primitive modulo $8d$ for any positive, odd and square-free $d$. \newline

The aim of this paper is to prove an unsmoothed version of \eqref{Smoothdef}. To this end, we first apply the above-mentioned approach of Soundararajan \cite{Sound2009} with its refinement by Harper \cite{Harper} to extend the results in \cites{Harper, QShen}.  This establishes the following sharp upper bounds on moments of shifted quadratic Dirichlet $L$-functions.
\begin{theorem}
\label{t1}
 With the notation as above and the truth of GRH, let $k\geq 1$ be a fixed integer and $a_1,\ldots, a_{k}$, $A$ fixed positive real numbers. Suppose that $X$ is a large real number and $t=(t_1,\ldots ,t_{k})$ a real $k$-tuple with $|t_j|\leq  X^A$. Then
\begin{align*}
\begin{split}
 & \sumstar_{\substack{(d,2)=1 \\ d \leq X}}\big| L\big(1/2+it_1,\chi^{(8d)} \big) \big|^{a_1} \cdots \big| L\big(1/2+it_{k},\chi^{(8d)}  \big) \big|^{a_{k}} \\
\ll & X(\log X)^{(a_1^2+\cdots +a_{k}^2)/4} \\
& \times \prod_{1\leq j<l \leq k} \Big|\zeta \Big(1+i(t_j-t_l)+\frac 1{\log X} \Big) \Big|^{a_ja_l/2}\Big|\zeta \Big(1+i(t_j+t_l)+\frac 1{\log X} \Big) \Big|^{a_ja_l/2}\prod_{1\leq j\leq k} \Big|\zeta \Big(1+2it_j+\frac 1{\log X} \Big) \Big|^{a^2_j/4+a_j/2},
\end{split}
\end{align*}
 where $\zeta(s)$ is the Riemann zeta function. Here the implied constant depends on $k$, $A$ and the $a_j$'s, but not on $X$ or the $t_j$'s.
\end{theorem}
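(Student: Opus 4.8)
The plan is to follow the method of Soundararajan \cite{Sound2009}, with Harper's refinement \cite{Harper}, adapted to the quadratic family, the arithmetic input coming from Poisson summation for quadratic character sums as in \cite{QShen} and \cite{G&Zhao2024}. Fix $x=X^{C}$ with $C=C(A)$ large enough. Under GRH one has, for each $1\le l\le k$ (the conditional bound of Soundararajan \cite{Sound2009}, in the form used by Chandee \cite{Chandee11}),
\begin{align*}
\log\big|L(1/2+it_l,\chi^{(8d)})\big|\le\Re\sum_{n\le x}\frac{\Lambda(n)\chi^{(8d)}(n)n^{-it_l}}{n^{1/2+1/\log x}\log n}\cdot\frac{\log(x/n)}{\log x}+\frac{\log\!\big(8d(1+|t_l|)\big)}{\log x}+O(1).
\end{align*}
Since $d\le X$ and $|t_l|\le X^{A}$ the conductor term is $O(1)$; the prime powers $n=p^{\nu}$, $\nu\ge3$, contribute $O(1)$, whereas the terms $n=p^{2}$ contribute $\tfrac12\sum_{p\le\sqrt{x}}\cos(2t_l\log p)\,p^{-1-1/\log x}+O(1)=\tfrac12\log\big|\zeta(1+2it_l+\tfrac1{\log X})\big|+O(1)$, so that weighting by $a_l$ and summing over $l$ already accounts for the factor $\prod_{j}|\zeta(1+2it_j+\tfrac1{\log X})|^{a_j/2}$ in the theorem. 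Writing $g(p)=\big(\sum_{l}a_l\cos(t_l\log p)\big)\tfrac{\log(x/p)}{\log x}$, it remains to bound
\begin{align*}
\sumstar_{\substack{(d,2)=1\\ d\le X}}\exp\Big(\sum_{p\le x}\frac{\chi^{(8d)}(p)g(p)}{p^{1/2+1/\log x}}+O(1)\Big).
\end{align*}

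The next step is Harper's dissection into prime ranges. Put $\beta_0=(\log\log X)^{-2}$ and $\beta_j=20^{j}\beta_0$, let $\mathcal J$ be maximal with $\beta_{\mathcal J}$ below a fixed small constant (so $\mathcal J\asymp\log\log\log X$), and set $\mathcal P_j(d)=\sum_{X^{\beta_{j-1}}<p\le X^{\beta_j}}\chi^{(8d)}(p)g(p)\,p^{-1/2-1/\log X}$, with $\mathcal P_0(d)$ the analogous sum over $p\le X^{\beta_0}$. Partition the square-free $d\le X$ according to the least index $j\ge0$ for which $|\mathcal P_j(d)|$ exceeds a threshold $\asymp\beta_j^{-3/4}$, calling that set $\mathcal S(j)$, and let $\mathcal S$ be the remaining $d$. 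On $\mathcal S$ one bounds $\exp(\sum_j\Re\mathcal P_j(d))$ by the product over $j$ of truncated exponentials $\sum_{r\le\ell_j}(\Re\mathcal P_j(d))^{r}/r!$ with $\ell_j\asymp\beta_j^{-3/4}$ chosen so that $(X^{\beta_j})^{\ell_j}\le X^{1/4}$; the truncation is legitimate on $\mathcal S$ because each $|\mathcal P_j(d)|$ is small there. Multiplying out gives a Dirichlet polynomial $\sum_{m\le X^{1/2}}\lambda(m)\chi^{(8d)}(m)$, and summing over $d$ by the quadratic character sum estimate
\begin{align*}
\sumstar_{\substack{(d,2)=1\\ d\le X}}\chi^{(8d)}(m)=c(m)\,X\,\mathbf 1_{m=\square}+O_{\varepsilon}\big(X^{1/2+\varepsilon}m^{1/4+\varepsilon}\big)
\end{align*}
(valid by Poisson summation, with $c(m)\asymp1$ a bounded weight encoding the coprimality and squarefree conditions) shows that, beyond an admissible error, only the diagonal $m=\square$ survives.

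The surviving terms reassemble into a quantity $\ll X\exp\big(\tfrac12\sum_{p\le x}g(p)^{2}p^{-1-2/\log X}\big)$ — the smooth weight $\log(x/p)/\log x$ perturbs the ensuing prime sums only by $O(1)$. Expanding $g(p)^{2}$ and using $\cos\alpha\cos\beta=\tfrac12\cos(\alpha-\beta)+\tfrac12\cos(\alpha+\beta)$ together with the Mertens-type estimates $\sum_{p\le X}\cos(\theta\log p)\,p^{-1-1/\log X}=\log|\zeta(1+i\theta+\tfrac1{\log X})|+O(1)$ and $\sum_{p\le X}p^{-1-1/\log X}=\log\log X+O(1)$, the diagonal $j=l$ produces $(\log X)^{(a_1^{2}+\cdots+a_k^{2})/4}\prod_{j}|\zeta(1+2it_j+\tfrac1{\log X})|^{a_j^{2}/4}$ and the off-diagonal $j\ne l$ produces $\prod_{j<l}|\zeta(1+i(t_j-t_l)+\tfrac1{\log X})|^{a_ja_l/2}\,|\zeta(1+i(t_j+t_l)+\tfrac1{\log X})|^{a_ja_l/2}$; combined with the prime-square factor from the first paragraph this is exactly the bound claimed in Theorem \ref{t1}.

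Finally, the sets $\mathcal S(j)$ must be shown negligible. For $d\in\mathcal S(j)$ I instead apply the GRH bound truncated at $X^{\beta_{j-1}}$, so the blocks $\mathcal P_0,\dots,\mathcal P_{j-1}$ are all below threshold there and can be handled by the truncated-exponential device, and compensate for the discarded longer primes by inserting a high power of $\mathcal P_j(d)$ (legitimate because $|\mathcal P_j(d)|$ exceeds its threshold on $\mathcal S(j)$). The resulting high moment of $\mathcal P_j$ over $d\le X$ is evaluated once more from the diagonal ($m=\square$) part of the character sum estimate — the requirement that this moment still be a Dirichlet polynomial of length $\le X^{1/2}$ is precisely what pins the $\beta_j$ below an absolute constant — and, with Harper's calibration of the threshold against the order of the moment, it gives a genuine power saving over the main term; summing over $0\le j\le\mathcal J$ then leaves $o$ of the main term. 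I expect the principal technical obstacle to be exactly this last optimization — balancing the truncation level $X^{\beta_{j-1}}$, the longest Dirichlet polynomial that the character sum estimate can absorb, and the sizes of the $\beta_j$ — together with keeping all error terms uniform in the shifts $t_1,\dots,t_k$ up to size $X^{A}$: the shifts enter only through the bounded multipliers $\cos(t_l\log p)$, but that uniformity must be carried through the Poisson summation step with care.
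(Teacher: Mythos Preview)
Your proposal follows the paper's proof closely: Soundararajan's inequality, Harper's range decomposition, truncated exponentials on the good set, high moments of the short sums on the exceptional sets, and the Poisson-type character sum estimate to extract the diagonal. Two points deserve correction.

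First, a slip in the setup: you write ``fix $x=X^{C}$ with $C$ large enough'' to make the conductor term $O(1)$, but $C$ must be a fixed \emph{small} constant (the paper uses $x=X^{\alpha_j}$ with $\alpha_j\le 10^{-M}$), otherwise the Dirichlet polynomials arising from the truncated exponentials are too long for the character sum estimate to absorb. The conductor term $(1+A)/\alpha_j$ is then a large constant on the good set, and on $\mathcal S(j)$ the even larger $(1+A)/\alpha_{j}$ is beaten by the saving from the high moment of $\mathcal P_{j+1}$, as you anticipate.

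Second, and more substantively: your treatment of the prime-square terms writes $\sum_{p\le\sqrt x}\chi^{(8d)}(p^2)\cos(2t_l\log p)/p^{1+1/\log x}$ as a quantity independent of $d$, but $\chi^{(8d)}(p^2)=0$ when $p\mid d$. The correction $\sum_{p\mid d}h(p^2)/p$ is not uniformly $O(1)$; it can be as large as $\log\log\log X$, which would inject an unwanted $(\log\log X)^{O(1)}$ into the final bound. The paper's remedy (Proposition~\ref{lem: logLbound}) is to absorb this $d$-dependence into a multiplicative weight $A(d)^{-a}$ with $A(d)=\prod_{p\mid d}(1-\tfrac{1}{2p})$, and then to prove the character sum estimate (Lemma~\ref{prsum}) with this weight attached; the weighted diagonal still produces the same main term up to a bounded Euler product. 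This is the one genuinely new wrinkle over the $q$-aspect argument in \cite{Szab}, and your outline needs it to reach the sharp exponent.
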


  In order to apply Theorem \ref{t1} in practice, one often needs to estimate the Riemann zeta function involved there. For this purpose, we apply \eqref{mertenstype} in Theorem \ref{t1} to readily deduce the following restatement of it.
\begin{corollary}
\label{cor1}
 With the notation as above and the truth of GRH, let $k\geq 1$ be a fixed integer and $a_1,\ldots, a_{k}$, $A$ fixed positive real numbers. Suppose that $X$ is a large real number and $t=(t_1,\ldots ,t_{k})$ a real $k$-tuple with $|t_j|\leq  X^A$. Then
\begin{align*}
\begin{split}
  \sumstar_{\substack{(d,2)=1 \\ d \leq X}} & \big| L\big( \tfrac{1}{2} +it_1,\chi^{(8d)} \big) \big|^{a_1} \cdots \big| L\big(\tfrac{1}{2}+it_{k},\chi^{(8d)}  \big) \big|^{a_{k}} \\
& \ll  X(\log X)^{(a_1^2+\cdots +a_{k}^2)/4} \prod_{1\leq j<l\leq k} g(|t_j-t_l|)^{a_ja_l/2}g(|t_j+t_l|)^{a_ja_l/2}\prod_{1\leq j\leq k} g(|2t_j|)^{a^2_j/4+a_j/2},
\end{split}
\end{align*}
where $g:\mathbb{R}_{\geq 0} \rightarrow \mathbb{R}$ is defined by
\begin{equation} \label{gDef}
g(x) =\begin{cases}
\log X,  & \text{if } x\leq 1/\log X \text{ or } x \geq e^X, \\
1/x, & \text{if }   1/\log X \leq x\leq 10, \\
\log \log x, & \text{if }  10 \leq x \leq e^{X}.
\end{cases}
\end{equation}
Here the implied constant depends on $k$, $A$ and the $a_j$'s, but not on $X$ or the $t_j$'s.
\end{corollary}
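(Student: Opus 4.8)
The plan is to derive Corollary~\ref{cor1} directly from Theorem~\ref{t1}, replacing each value of the Riemann zeta function that appears on the right-hand side of the theorem by an explicit upper bound expressed through the function $g$ of \eqref{gDef}. The arguments of $\zeta$ occurring there are of the three shapes $\tau=t_j-t_l$, $\tau=t_j+t_l$ and $\tau=2t_j$, and the hypothesis $|t_j|\le X^A$ forces $|\tau|\le 2X^A$ in every instance. Consequently it is enough to establish the single pointwise estimate
\begin{align*}
\Bigl|\zeta\Bigl(1+\tfrac{1}{\log X}+i\tau\Bigr)\Bigr|\ll g(|\tau|),\qquad \tau\in\mathbb{R},\ |\tau|\le 2X^A,
\end{align*}
with an absolute implied constant, and then to raise it to the positive exponents $a_ja_l/2$ and $a_j^2/4+a_j/2$ prescribed by Theorem~\ref{t1}. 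Since only $2\binom{k}{2}+k=k^2$ such factors occur and all exponents are bounded in terms of $\max_j a_j$, the product of the implied constants introduced this way is $O_{k,A,a_j}(1)$ and is absorbed into the final implied constant.

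It remains to prove the displayed bound, which is in essence the assertion of \eqref{mertenstype}; I would verify it by separating small and large shifts. If $|\tau|\le 10$, the point $s=1+\tfrac{1}{\log X}+i\tau$ lies in a fixed bounded neighbourhood of $s=1$, where $\zeta(s)-(s-1)^{-1}$ is entire and hence $\zeta(s)=(s-1)^{-1}+O(1)$. Because $|s-1|=\bigl|\tfrac{1}{\log X}+i\tau\bigr|\ge\max\{\tfrac{1}{\log X},|\tau|\}$, this gives $|\zeta(s)|\le\min\{\log X,\,1/|\tau|\}+O(1)$; for $|\tau|\le 1/\log X$ the right side is $\ll\log X=g(|\tau|)$, and for $1/\log X\le|\tau|\le 10$ it is $\ll 1/|\tau|=g(|\tau|)$, the $O(1)$ being absorbed since $1/|\tau|\ge 1/10$. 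If instead $10\le|\tau|\le 2X^A$, I would invoke the conditional bound for $\zeta$ just to the right of the line $\mathrm{Re}(s)=1$: under GRH---indeed RH for $\zeta$ alone suffices here---one has $|\zeta(\sigma+i\tau)|\ll\log\log|\tau|$ uniformly for $\sigma\ge 1$ and $|\tau|\ge 3$, which is precisely the refinement of the unconditional bound $|\zeta(\sigma+i\tau)|\ll\log(|\tau|+2)$ that $g$ records. Taking $\sigma=1+\tfrac{1}{\log X}$ then yields $|\zeta(1+\tfrac{1}{\log X}+i\tau)|\ll\log\log|\tau|=g(|\tau|)$, the last step using $10\le|\tau|\le 2X^A\le e^X$ for $X$ large; in particular the clause $x\ge e^X$ in \eqref{gDef} is never activated under the standing assumption $|t_j|\le X^A$.

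The deduction is thus essentially bookkeeping, and the one point that carries real content is the conditional estimate in the second range: it must be applied with uniformity in $\sigma\ge 1$, not merely at $\sigma=1$. Should one prefer to quote only the classical form $|\zeta(1+it)|\ll\log\log|t|$, the passage from $\sigma=1$ to $\sigma=1+\tfrac{1}{\log X}$ can be made by writing $\zeta$ as $\zeta(1+it)\exp\bigl(\int_1^{1+1/\log X}(\zeta'/\zeta)(u+it)\,du\bigr)$ and using that on RH $(\zeta'/\zeta)(u+it)\ll(\log\log|t|)^2$ for $u\ge 1$, so that the integral---over an interval of length $1/\log X$ with $|t|=|\tau|\le 2X^A$---is $o(1)$ and the exponential factor is $\asymp 1$. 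Granting \eqref{mertenstype}, no difficulty arises, and combining the two ranges of $|\tau|$ completes the proof.
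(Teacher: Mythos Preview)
Your proposal is correct and follows the same route as the paper: the paper simply says that Corollary~\ref{cor1} is obtained by applying \eqref{mertenstype} to the zeta factors in Theorem~\ref{t1}, which is precisely the pointwise bound $|\zeta(1+\tfrac{1}{\log X}+i\tau)|\ll g(|\tau|)$ that you isolate and verify. The only difference is one of presentation---where the paper quotes \eqref{mertenstype} as a black box (itself drawn from \cite{Kou} and \cite{Szab}), you sketch its proof directly via the Laurent expansion near $s=1$ and Littlewood's conditional bound on the $1$-line.
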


As an application of Corollary \ref{cor1}, our next result lends further credence to Jutila's conjecture.
\begin{theorem}
\label{quadraticmean}
With the notation as above and the truth of GRH, for any integer $k \geq 1$ and any real number $m$ satisfying $2m \geq k+1$, we have for large $X$, $Y$  and any $\varepsilon>0$, 
\begin{align}
\label{mainestimation}
 S_m(X,Y):=\sumstar_{\substack{d \leq X \\ (d,2)=1}}\Big | \sum_{n \leq Y}\chi^{(8d)}(n)\Big |^{2m} \ll XY^m(\log X)^{E(m,k,\varepsilon)},
\end{align}
where
\begin{equation} \label{Edef}
 E(m,k,\varepsilon) = \max (2m^2-m+1, (2m-k)^2/4+2m+1+\varepsilon, 2m^2-2mk+3k^2/4+m-3k/4+\varepsilon) .
 \end{equation}
  In particular, we have for $m > (\sqrt{5}+1)/2$, 
\begin{align}
\label{mainestimationmlarge}
 S_m(X,Y) \ll XY^m(\log X)^{2m^2-m+1}.
\end{align}
\end{theorem}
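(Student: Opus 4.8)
\textbf{Proof plan for Theorem \ref{quadraticmean}.}
The plan is to open the character sum using a smooth partition of unity together with a Perron-type contour integral, so as to reduce the moment of the sharp-cutoff sum $\sum_{n\le Y}\chi^{(8d)}(n)$ to moments of $L$-values $L(1/2+it,\chi^{(8d)})$ weighted by a smooth function. Concretely, I would write $\sum_{n\le Y}\chi^{(8d)}(n)$ as a contour integral of $L(s,\chi^{(8d)})Y^s/s$ along a vertical line $\Re s=1/2$, truncated at height $|t|\le T$ for a parameter $T$ to be optimized (one expects $T$ a small power of $X$, or $T$ polynomial in $\log X$). The tail of the truncation is controlled under GRH by the convexity/Lindel\"of-on-average bound for $L(1/2+it,\chi^{(8d)})$. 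After raising to the $2m$-th power and expanding, this expresses $S_m(X,Y)$ in terms of integrals over $t_1,\dots,t_{2m}$ of the shifted moment $\sumstar_{d\le X}\prod_{j=1}^{2m}|L(1/2+it_j,\chi^{(8d)})|$, to which Corollary \ref{cor1} applies with all $a_j=1$ and $k$ there replaced by $2m$.

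The core of the argument is then the estimation of the resulting multiple integral
\[
\int_{|t_1|\le T}\cdots\int_{|t_{2m}|\le T} Y^{m}\,(\log X)^{m/2}\prod_{1\le j<l\le 2m} g(|t_j-t_l|)^{1/2}g(|t_j+t_l|)^{1/2}\prod_{1\le j\le 2m} g(|2t_j|)^{3/4}\;\frac{\dif t_1\cdots \dif t_{2m}}{\prod_j(1+|t_j|)}.
\]
The function $g$ behaves like $1/|u|$ for $|u|$ between $1/\log X$ and a constant, like $\log\log|u|$ for larger $|u|$, and like $\log X$ only in a window of width $1/\log X$ around $0$. The standard way to handle such an integral (going back to the arguments in \cite{S&Y}, \cite{Szab}, \cite{Munsch17}) is to dyadically decompose the ranges of the $t_j$ and of the differences/sums $t_j\pm t_l$, and to bound the volume of the region where prescribed dyadic constraints hold. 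The logarithmic singularities of $g$ at the origin are integrable and, when several $t_j$ cluster together, they contribute powers of $\log X$; the combinatorics of which clusterings dominate is exactly what produces the three competing exponents appearing in the definition \eqref{Edef} of $E(m,k,\varepsilon)$. The role of the auxiliary integer $k$ is that, after splitting the $2m$ variables into a ``clustered near $0$'' group and a ``spread out'' group, one optimizes over the size of these groups; taking the group sizes to run over integers $0,1,\dots$ and bounding the worst case gives the maximum of the three expressions, with the $\varepsilon$ absorbing the $(\log\log)$ factors and the finitely many boundary cases.

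I would carry out the steps in the following order: (i) set up the smooth dyadic decomposition in $n$ and the Perron integral, fixing the truncation height $T$ and bounding the error terms under GRH; (ii) expand the $2m$-th power, apply Corollary \ref{cor1} to bring in the $g$-factors, and reduce to the multiple integral above; (iii) perform the dyadic analysis of the integral, separating variables into clustered and generic groups, and bound the volume of each dyadic cell; (iv) optimize the resulting bound over the partition sizes to obtain the three terms in $E(m,k,\varepsilon)$, and finally (v) specialize: for $2m\ge k+1$ check that the stated maximum is what emerges, and for $m>(\sqrt5+1)/2$ check by an elementary comparison of the three quadratics (choosing $k$ appropriately, e.g.\ $k=\lceil 2m\rceil-1$ or a nearby value) that the first term $2m^2-m+1$ dominates, giving \eqref{mainestimationmlarge}.

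The main obstacle I anticipate is step (iii)--(iv): correctly bookkeeping the dyadic volumes when many of the shifts $t_j-t_l$, $t_j+t_l$, and $2t_j$ are simultaneously small, since these constraints are not independent (they are linear relations among the $t_j$), and extracting the precise exponent of $\log X$ from each configuration. One has to be careful that a cluster of $r$ variables near a common point contributes volume roughly $(\log X)^{-(r-1)}$ while the $g$-factors among them contribute roughly $(\log X)^{\binom{r}{2}/2 + \text{(from the }g(|2t_j|)\text{ terms)}}$, and that the cross terms between the cluster and the rest are handled by the $\log\log$ regime; balancing these is exactly where the quadratic exponents $2m^2-m+1$, $(2m-k)^2/4+2m+1$, and $2m^2-2mk+3k^2/4+m-3k/4$ come from, and getting the constants in the quadratics right is the delicate part. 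The truncation error in step (i) and the convexity bound needed there are comparatively routine under GRH, as is the reduction in step (ii).
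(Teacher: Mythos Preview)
Your plan has the right flavor but misses two structural points that the paper relies on and that your outline does not recover.

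First, expanding the $2m$-th power into a $2m$-fold integral over $t_1,\dots,t_{2m}$ with all $a_j=1$ only makes sense when $2m$ is a positive integer, whereas the statement is for real $m$ with $2m\ge k+1$. The paper avoids this by a H\"older step that is the real source of the parameter $k$: one writes
\[
\Big(\int_0^E |L(\tfrac12+it,\chi^{(8d)})|\,\dif t\Big)^{2m}
\ll \int_{[0,E]^k}\prod_{a=1}^k|L(\tfrac12+it_a,\chi^{(8d)})|\Big(\int_{\mathcal D}|L(\tfrac12+iu,\chi^{(8d)})|\,\dif u\Big)^{2m-k}\dif\mathbf t,
\]
and then applies H\"older to the inner integral to produce a single variable $u$ carrying the exponent $2m-k$. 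Corollary~\ref{cor1} is thus invoked with $k+1$ shifts and exponents $(1,\dots,1,2m-k)$, not with $2m$ shifts of exponent $1$. So $k$ is not an emergent cluster size from the dyadic bookkeeping; it is a fixed input that determines how many copies of $|L|$ are collapsed into the $|L|^{2m-k}$ factor, and the three terms in $E(m,k,\varepsilon)$ arise from three concrete regimes in the resulting $(k+1)$-dimensional integral (the variable $u$ close to the $t_a$'s; $u$ far from them with $|u|\ge 5$; and $u$ far with $|u|\le 5$). With your $2m$-variable expansion the combinatorics would be different and would not produce these particular quadratics in $m$ and $k$.

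Second, two ancillary pieces are missing. The passage from the sharp cutoff to the smoothed sum leaves an error $\sum_{n\le Y}\chi^{(8d)}(n)(1-\Phi_U(n/Y))$ supported on a short interval near $Y$; the paper controls its $2m$-th moment by Cauchy--Schwarz, bounding the second moment via Heath--Brown's quadratic large sieve and the $(4m-2)$-th moment by feeding back into the $L$-function machinery. This is not ``routine under GRH'' without that large-sieve input. And for the special case \eqref{mainestimationmlarge}, the paper takes $k=2$ (not $k\approx 2m$): for $m>(\sqrt5+1)/2$ one checks directly that $2m^2-m+1$ dominates the other two terms in \eqref{Edef} when $k=2$.
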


The bound in \eqref{mainestimationmlarge} follows from \eqref{mainestimation} by setting $k=2$ there and noting that $E(m,k,\varepsilon) =2m^2-m+1$ for $m^2-m-1>0$.  Moreover, H\"older's inequality yields that for any real number $n>1$, 
\begin{align*}
 S_m(X,Y) \ll  X^{1-1/n}(S_{mn}(X,Y))^{1/n}. 
\end{align*}
   The above together with Theorem \ref{quadraticmean} then implies that $S_m(X,Y) \ll XY^m(\log X)^{O(1)}$ for any $m>0$, upon choosing $n$ sufficiently enough. 

\section{Preliminaries}
\label{sec 2}

In this section, we cite some results needed in the proof of our theorems.

\subsection{Sums over primes}
\label{sec2.1}

We reserve the letter $p$ for a prime number in this paper.  We have the following result concerning sums over primes.
\begin{lemma}
\label{RS} Let $x \geq 2$ and $\alpha \geq 0$. We have, for some constant $b_1$,
\begin{align}
\label{merten}
\sum_{p\le x} \frac{1}{p} =& \log \log x + b_1+ O\Big(\frac{1}{\log x}\Big), \\
\label{mertenpartialsummation}
\sum_{p\le x} \frac {\log p}{p} =& \log x + O(1),  \quad \mbox{and} \\
\label{mertenstype}
  \sum_{p\leq x} \frac{\cos(\alpha \log p) }{p}=& \log |\zeta(1+1/\log x+i\alpha)| +O(1)
  \leq
\begin{cases}
\log\log x+O(1)            & \text{if }  \alpha\leq 1/\log x \text{ or } \alpha\geq e^x  ,   \\
\log(1/\alpha)+O(1)        & \text{if }  1/\log x\leq \alpha \leq 10,   \\
\log\log\log \alpha + O(1) & \text{if }   10 \leq \alpha \leq e^x.  
\end{cases}
\end{align}
The estimates in the first two cases of \eqref{mertenstype} are unconditional and the third holds under the Riemann hypothesis.
\end{lemma}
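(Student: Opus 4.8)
\emph{Proof plan.} The estimates \eqref{merten} and \eqref{mertenpartialsummation} are classical theorems of Mertens, so the plan is to recall them briefly and to concentrate on \eqref{mertenstype}. For \eqref{mertenpartialsummation} I would start from the elementary identity $\sum_{n\le x}\Lambda(n)/n=\log x+O(1)$ and discard the contribution of the prime powers $p^m$ with $m\ge 2$, which is $\ll\sum_p (\log p)/p^{2}\ll 1$; then \eqref{merten} follows from \eqref{mertenpartialsummation} by partial summation, with the constant $b_1$ and the error term $O(1/\log x)$ appearing in the usual way.

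For \eqref{mertenstype} the first step is to prove the stated equality with $\log|\zeta|$. Writing $\cos(\alpha\log p)=\operatorname{Re}(p^{-i\alpha})$ and recalling that $\log\zeta(s)=\sum_p p^{-s}+O(1)$ uniformly for $\operatorname{Re}(s)>1$ (the prime‑power terms forming an absolutely and uniformly convergent sum there), one obtains
\[\sum_p\frac{\cos(\alpha\log p)}{p^{1+1/\log x}}=\operatorname{Re}\log\zeta\Bigl(1+\tfrac{1}{\log x}+i\alpha\Bigr)+O(1)=\log\Bigl|\zeta\Bigl(1+\tfrac{1}{\log x}+i\alpha\Bigr)\Bigr|+O(1).\]
It then remains to replace the full sum weighted by $p^{-1-1/\log x}$ by the truncated sum $\sum_{p\le x}p^{-1}$ at the cost of $O(1)$. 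I would split the difference into $\sum_{p\le x}\cos(\alpha\log p)\bigl(p^{-1}-p^{-1-1/\log x}\bigr)$ and the tail $-\sum_{p>x}\cos(\alpha\log p)\,p^{-1-1/\log x}$. The first is $\ll\sum_{p\le x}\frac{1-p^{-1/\log x}}{p}\ll\frac{1}{\log x}\sum_{p\le x}\frac{\log p}{p}\ll 1$ by \eqref{mertenpartialsummation} (using $1-e^{-u}\le u$), and the second is $\ll\sum_{p>x}p^{-1-1/\log x}\ll 1$ by partial summation against the Chebyshev bound $\pi(t)\ll t/\log t$. All of this is unconditional.

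With the equality in hand, each of the three cases reduces to an upper bound on $|\zeta(1+\tfrac1{\log x}+i\alpha)|$. When $\alpha\ge e^{x}$ the bound $\log\log x+O(1)$ is immediate from $\sum_{p\le x}1/p$ via \eqref{merten}, with no cancellation needed. When $\alpha\le 1/\log x$, since $\zeta(s)-(s-1)^{-1}$ is entire one has $|\zeta(1+\tfrac1{\log x}+i\alpha)|\le |1/\log x+i\alpha|^{-1}+O(1)\le\log x+O(1)$. When $1/\log x\le\alpha\le 10$, the boundedness of $(s-1)\zeta(s)$ on a fixed compact neighbourhood of $s=1$ gives $|\zeta(1+\tfrac1{\log x}+i\alpha)|\ll |1/\log x+i\alpha|^{-1}\le 1/\alpha$; these two cases are also unconditional. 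The remaining range $10\le\alpha\le e^{x}$ is where RH enters: integrating $\zeta'/\zeta$ along the horizontal segment from $1+i\alpha$ to $1+\tfrac1{\log x}+i\alpha$ and using the conditional bound $\zeta'/\zeta(\sigma+i\alpha)\ll\log\log\alpha$ valid for $\sigma\ge 1$ and $\alpha\ge 2$ (Titchmarsh), one finds $\log|\zeta(1+\tfrac1{\log x}+i\alpha)|=\log|\zeta(1+i\alpha)|+O\!\bigl(\tfrac{\log\log\alpha}{\log x}\bigr)=\log|\zeta(1+i\alpha)|+O(1)$, the last step because $\alpha\le e^{x}$. Littlewood's conditional estimate $|\zeta(1+i\alpha)|\ll\log\log\alpha$ then yields the bound $\log\log\log\alpha+O(1)$.

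The only genuinely non‑routine ingredients are the two conditional facts invoked in the last case — the near‑$1$‑line bound on $\zeta'/\zeta$ and Littlewood's bound on $|\zeta(1+it)|$ — both standard consequences of RH that I would simply cite; everything else is classical analytic number theory, so I expect no real obstacle beyond locating clean references.
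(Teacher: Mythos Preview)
Your argument is essentially sound and far more detailed than the paper's own proof, which consists entirely of citations: \eqref{merten} and \eqref{mertenpartialsummation} to Montgomery--Vaughan, the equality in \eqref{mertenstype} to Koukoulopoulos, and the three upper bounds to Szab\'o. Your treatment of Mertens' theorems, of the equality $\sum_{p\le x}\cos(\alpha\log p)/p=\log|\zeta(1+1/\log x+i\alpha)|+O(1)$, and of the two unconditional cases is correct and standard.

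The one point to tighten is the RH case $10\le\alpha\le e^x$. The bound you invoke, $\zeta'/\zeta(\sigma+i\alpha)\ll\log\log\alpha$ for $\sigma\ge1$, is slightly too optimistic: the standard conditional estimate (e.g.\ Montgomery--Vaughan, Corollary~13.16, or Titchmarsh~\S14) is $\zeta'/\zeta(1+it)\ll(\log\log t)^2$. With the squared bound your horizontal integral yields $O\bigl((\log\log\alpha)^2/\log x\bigr)$, which for $\alpha$ near $e^x$ is of size $\log x$, not $O(1)$, so the reduction to Littlewood at $\sigma=1$ breaks down there. The cleanest fix is to bypass the integration entirely and cite the direct bound: under RH one has, uniformly for $\sigma\ge1$ and $|t|\ge10$, a short Dirichlet polynomial approximation $\log\zeta(\sigma+it)=\sum_{n\le(\log t)^2}\Lambda(n)/(n^{\sigma+it}\log n)+O(1)$ (this is essentially what Koukoulopoulos's lemma provides), from which $\log|\zeta(\sigma+it)|\le\sum_{p\le(\log t)^2}p^{-1}+O(1)=\log\log\log t+O(1)$ follows at once. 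This is presumably what the references the paper cites do, and it gives the third case without detouring through $\zeta'/\zeta$.
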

\begin{proof}
  The expressions in \eqref{merten} and \eqref{mertenpartialsummation} can be found in parts (d) and (b) of [6, Theorem 2.7], respectively. The equality in  \eqref{mertenstype} is a special case given in \cite[Lemma 3.2]{Kou} and the other estimations can be found in \cite[Lemma 2]{Szab}.
\end{proof}

\subsection{Smoothed character sums}

  We now define for any integer $d$,
\begin{align}
\label{Addef}
\begin{split}
  A(d)=\prod_{\substack{p|d }}\Big(1-\frac {1}{2p}\Big).
\end{split}
\end{align}
As per convention, the empty product is $1$ and the empty sum $0$.  We observe that $A(d)>0$ for any $d$. \newline

  We write $\square$ for a perfect square of rational integers. Our next result is for smoothed quadratic character sums.
\begin{lemma}
\label{prsum}
Suppose that $\Phi$ is a non-negative smooth function whose support is a compact subset of the positive real numbers.  With the notation as above and $k\in \rear$ with $k>0$, for any positive even integer $n$, 
\begin{align}
\label{sumAdnven}
\sumstar\limits_{(d,2)=1}A^{-k}(d)\chi^{(8d)}(n)\Phi \Big( \frac{d}{X} \Big)=\sumstar\limits_{(d,2)=1}\chi^{(8d)}(n)\Phi \Big( \frac{d}{X} \Big)=0.
\end{align}
If $n$ is an odd positive integer, then
\begin{align}
\label{sumAd}
\sumstar_{(d,2)=1} & A(d)^{-k}\chi^{(8d)}(n) \Phi \Big( \frac{d}{X} \Big) \\
=& \delta_{n=\square}{\widehat\Phi}(1) \frac{X}{2}\prod_{(p,2)=1}\Big(1-\frac{1}{p}\Big) \Big(1+\frac{A(p)^{-k}}{p} \Big)\times \prod_{p|n}\Big(1+\frac{A(p)^{-k}}{p} \Big)^{-1}+
O_k(X^{1/2+\varepsilon}n^{1/4+\varepsilon}), \nonumber \\
\label{meancharsum}
\sumstar_{\substack{(d,2)=1}}&  \chi^{(8d)}(n) \Phi\Big(\frac{d}{X}\Big)=
\displaystyle \delta_{n=\square}{\widehat \Phi}(1) \frac{2X}{3\zeta(2)} \prod_{p|n} \Big(\frac p{p+1}\Big) + O (X^{1/2+\varepsilon}n^{1/4+\varepsilon}),
\end{align}
  where $\delta_{n=\square}=1$ if $n$ is a square and $\delta_{n=\square}=0$ otherwise.
\end{lemma}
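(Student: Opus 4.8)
The plan is to prove Lemma~\ref{prsum} by reducing the smoothed character sum to a sum over characters $\chi^{(8d)}(n)$ with $d$ square-free and coprime to $2$, and then handling this via a Mellin transform together with the standard tools for smoothed sums of quadratic symbols. First I would record the observation that $A(d)^{-k}$ depends only on the radical of $d$, so on square-free $d$ it is a multiplicative function of $d$; writing $A(d)^{-k}=\prod_{p\mid d}A(p)^{-k}$, one expands this as a Dirichlet convolution and reduces \eqref{sumAd} to a weighted version of \eqref{meancharsum}. For the vanishing statement \eqref{sumAdnven}, the key point is quadratic reciprocity: when $n$ is even, $\chi^{(8d)}(n)$ has a nontrivial dependence on $d\bmod 8$ (coming from the factor $\leg{8d}{2}$ or equivalently $\leg{2}{d}$ patterns), and since $8d$ ranges over a fixed residue class structure, pairing $d$ with a suitable companion (or appealing directly to the known fact that $\sum^*_{(d,2)=1}\chi^{(8d)}(n)\Phi(d/X)=0$ for $n$ even, proved e.g.\ in \cite{sound1}) kills the sum. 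I would simply cite this.

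Next, for odd $n$, I would follow the now-classical argument of Soundararajan \cite{sound1}. Using Mellin inversion write $\Phi(d/X)=\frac{1}{2\pi i}\int_{(c)}\widehat\Phi(s)X^s d^{-s}\,ds$, and insert the Dirichlet series $\sum^*_{(d,2)=1}\chi^{(8d)}(n)d^{-s}$. Detecting the square-free condition by $\mu^2(d)=\sum_{a^2\mid d}\mu(a)$ and using quadratic reciprocity to convert $\chi^{(8d)}(n)$ into $\leg{n}{d}$ up to an explicit factor depending on $d\bmod 8$ (which is harmless since we can split into residue classes), this Dirichlet series becomes, up to an Euler factor at primes dividing $2n$, essentially $L(s,\chi_n)/L(2s,\chi_n^2)$-type object, which has a pole at $s=1$ precisely when $n=\square$. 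Shifting the contour past $s=1$, the residue produces the main term with the arithmetic factor $\prod_{p\mid n}(\cdots)^{-1}$, and the remaining integral on a line $\Re s=1/2+\varepsilon$ gives the error term $O(X^{1/2+\varepsilon}n^{1/4+\varepsilon})$ after a convexity/Polya--Vinogradov-type bound for the character sum in the $n$-aspect. The weight $A(d)^{-k}$ only modifies the Euler product defining the residue, replacing $1+1/p$ type factors by $1+A(p)^{-k}/p$, which is exactly what appears in \eqref{sumAd}; since $A(p)^{-k}=(1-\tfrac1{2p})^{-k}=1+O(1/p)$, the relevant Euler product converges and the pole structure is unchanged.

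The third formula \eqref{meancharsum} is the special case $k=0$ (so $A(d)^{-k}\equiv1$), for which $1+A(p)^{-k}/p=1+1/p$ and $\prod_{(p,2)=1}(1-1/p)(1+1/p)=\prod_{(p,2)=1}(1-1/p^2)=\big(\zeta(2)(1-2^{-2})\big)^{-1}=\tfrac{4}{3\zeta(2)}$, giving the constant $\tfrac{2X}{3\zeta(2)}$ after the factor $X/2$; the factor $\prod_{p\mid n}(1+1/p)^{-1}=\prod_{p\mid n}\tfrac{p}{p+1}$ matches. So \eqref{meancharsum} follows directly from \eqref{sumAd} by specialization, and I would present it that way rather than reproving it.

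The main obstacle is the uniformity in $n$ in the error term: one must track the dependence of the shifted $L$-function (or the relevant character sum over $d$) on the modulus $n$ carefully to land on $n^{1/4+\varepsilon}$, which requires the functional equation / approximate functional equation for the Dirichlet $L$-function $L(s,\chi_{8n})$ in the $n$-aspect rather than a crude trivial bound; this is exactly the content of Soundararajan's Lemma in \cite{sound1}, and the only new wrinkle here is carrying the bounded multiplicative weight $A(d)^{-k}$ through the computation, which affects only the convergent Euler product and not the analytic continuation or the error analysis. Everything else is bookkeeping.
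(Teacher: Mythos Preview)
Your approach for odd $n$ is essentially the same as the paper's: Mellin inversion, factor the resulting Dirichlet series over $d$ into $L^{(2)}(s,\leg{\cdot}{n})$ times an Euler product analytic in $\Re(s)>1/2$, shift to $\Re(s)=1/2+\varepsilon$, and use the convexity bound $L(s,\leg{\cdot}{n})\ll (n(1+|s|))^{(1-\Re s)/2+\varepsilon}$ for the error. The paper does not split into residue classes modulo $8$; it simply writes $\chi^{(8d)}(n)=\chi^{(8)}(n)\chi^{(d)}(n)$ and works directly with the multiplicative series $\sum_{(d,2)=1}\mu^2(d)A(d)^{-k}\chi^{(d)}(n)d^{-s}$, which already has the Euler product form. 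Your remark that \eqref{meancharsum} is the specialization $k=0$ of \eqref{sumAd} is correct and is exactly how the paper treats it.

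The one genuine misstep is your explanation of the even-$n$ case. There is no pairing argument and no dependence on $d\bmod 8$ to exploit: since $\chi^{(8d)}=\leg{8d}{\cdot}$ is a character of modulus $8d$, and $8d$ is always even, one has $\chi^{(8d)}(n)=0$ whenever $n$ is even, term by term. The sums in \eqref{sumAdnven} are sums of zeros. The paper dispatches this in one line; your proposed mechanism (a nontrivial oscillation in $d$) is not what is happening and would be the wrong diagnosis if you wrote it out.
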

\begin{proof}
  The relations in \eqref{sumAdnven} are valid trivially, as $\chi^{(8d)}(n)=0$ if $n$ is even.  Moreover, \eqref{sumAd} and \eqref{meancharsum} can be established obtained in a manner similar way.  Thus we shall only prove \eqref{sumAd} here.  Let $\mu(n)$ denote the M\"obius function.  The Mellin inversion gives that the left-hand side of \eqref{sumAd} is
\begin{align}
\label{sumdint}
\sum_{(d,2)=1}A(d)^{-k}\mu^2(d)\chi^{(8d)}(n)\Phi\Big( \frac{d}{X} \Big) 
= \frac 1{2\pi i } \int\limits_{(2)} \chi^{(8)}(n)\Big (\sum_{(d,2)=1}\frac {A(d)^{-k}\mu^2(d)\chi^{(d)}(n)}{d^s}\Big )\widehat{\Phi}(s) X^s \dif s.
\end{align}
Recall that the Mellin transform $\widehat f(s)$ with $s \in \mc$ for any function $f$ is defined by
\begin{equation*}
\widehat f (s) = \int\limits_{0}^{\infty} f(x)x^{s}\frac {\dif x}{x}.
\end{equation*}
and that repeated integration by parts yields that for any integer $E \geq 0$,
\begin{align}
\label{phihatbound}
 \widehat \Phi (s)  \ll  \frac{1}{(1+|s|)^{E}}.
\end{align}
Now we transform the integrand in \eqref{sumdint} via the formula,
\begin{align*}
 \sum_{(d,2)=1}\frac {A(d)^{-k}\mu^2(d)\chi^{(d)}(n)}{d^s}=\prod_{(p,2)=1} \Big( 1+\frac {A(p)^{-k}\leg {p}{n}}{p^s} \Big)=L^{(2)}\Big( s, \leg {\cdot}{n} \Big)P_n(s),
\end{align*}
  where $L^{(n)}(s,\chi)$ denotes the Euler product of $L(s,\chi)$ with the primes dividing $n$ removed and 
\begin{align*}
 P_n(s)=\prod_{(p,2)=1} \Big( 1-\leg {p}{n}p^{-s} \Big) \Big(1+\frac {A(p)^{-k}\leg {p}{n}}{p^s} \Big).
\end{align*}
  Note that $P_n(s)$ is analytic in the region $\Re(s)>\frac 12$. \newline

  We now shift the line of integration in \eqref{sumdint} to the line $\Re(s)=1/2+\varepsilon$, pasing a simple pole at $s=1$ for
 $n=\square$.  The residue is
\begin{align*}
{\widehat\Phi}(1) \frac{X}{2}\prod_{(p,2)=1}\Big(1-\frac{1}{p}\Big) \Big(1+\frac{A(p)^{-k}}{p} \Big)\times \prod_{p|n}\Big(1+\frac{A(p)^{-k}}{p} \Big)^{-1}.
\end{align*}

The convexity bound (see \cite[exercise 3, p. 100]{iwakow}) asserts that for $0 \leq \Re(s) \leq 1$,
\begin{align*}
L\Big( s, \leg {\cdot}{n} \Big) \ll (n\cdot (1+|s|))^{(1-\Re(s))/2+\varepsilon}.
\end{align*}
Now, this, together with \eqref{phihatbound}, implies that the contribution from the integration on the new line can be absorbed in the $O$-term in \eqref{sumAd}. This establishes the lemma.
\end{proof}

\subsection{Upper bounds for quadratic Dirichlet $L$-functions}
\label{sec2.4}

  Let $\Lambda(n)$ denote the von Mangoldt function. We cite the following result for $\log |L(\sigma+it,\chi^{(8d)})|$ from \cite[Proposition 2.3]{Munsch17}.
\begin{lemma}
\label{up}
 Assume the truth of GRH. Let $\sigma \geq 1/2$ and $t$ be real numbers and define $\log^+ t=\max\{ 0,\log t\}$. For any primitive Dirichlet character $\chi$ modulo $q$, we have for any $x \geq 2$,
\begin{equation}
    \label{variant}
    \log |L(\sigma+it, \chi)| \leq \Re \sum_{n\leq x}\frac{\chi(n)\Lambda(n)}{n^{1/2+\max(\sigma-1/2, 1/\log x) +it} \log n}\frac{\log x/n}{\log x}+\frac{\log q+\log^+ t}{\log x}+O\Big( \frac{1}{\log x} \Big).
\end{equation}
\end{lemma}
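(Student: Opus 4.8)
The plan is to derive \eqref{variant} by the standard Soundararajan-type argument: apply the explicit formula for $\log L(s,\chi)$ to a suitable test kernel, drop the contribution of the zeros using GRH, and estimate the remaining archimedean and conductor terms. First I would start from the logarithmic derivative identity
\begin{align*}
 -\frac{L'}{L}(s,\chi) = \sum_{n\geq 1}\frac{\chi(n)\Lambda(n)}{n^s}, \qquad \Re(s)>1,
\end{align*}
and recall Hadamard's factorization for $L(s,\chi)$, which expresses $\frac{L'}{L}(s,\chi)$ as a sum over the nontrivial zeros $\rho$ of $L(s,\chi)$ plus archimedean factors coming from the Gamma-factor and the conductor $q$. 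Integrating $-\frac{L'}{L}(s+w,\chi)$ against the kernel
\begin{align*}
 \frac{1}{2\pi i}\int_{(c)} \frac{x^w}{w^2}\,\dif w \cdot \frac{1}{\log x}
\end{align*}
(equivalently, using Perron-type truncation with the weight $\tfrac{\log(x/n)}{\log x}$ built in) yields, for $\Re(s)\geq 1/2$,
\begin{align*}
 \log |L(s,\chi)| = \Re\sum_{n\leq x}\frac{\chi(n)\Lambda(n)}{n^{s}\log n}\cdot\frac{\log(x/n)}{\log x} \;-\; \Re\sum_{\rho}(\text{zero contribution}) \;+\; (\text{archimedean}).
\end{align*}
This is exactly the shape of the inequality in Munsch's Proposition 2.3, specialized to $\chi = \chi^{(8d)}$ and $q = 8d$, though here it is stated for a general primitive character.

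The second step is to show the zero-sum contribution is nonpositive, which is where GRH enters. Under GRH every nontrivial zero has $\Re(\rho) = 1/2$, so writing $\rho = 1/2 + i\gamma$ and $s = \sigma + it$ with $\sigma\geq 1/2$, each term in the zero sum has the form of $-\Re$ of a positive multiple of a function that is manifestly $\geq 0$ on the relevant contour — concretely, the kernel is chosen (a squared/Fejér-type weight, reflected in the $1/w^2$ factor and the $\log(x/n)/\log x$ truncation) so that its Mellin transform evaluated at $s - \rho = (\sigma - 1/2) + i(t-\gamma)$ has nonnegative real part when $\sigma\geq 1/2$. Hence dropping the zero sum can only increase the right-hand side, giving the claimed upper bound. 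The choice $\max(\sigma - 1/2, 1/\log x)$ in the exponent of $n$ in \eqref{variant} is precisely what guarantees we stay to the right of the critical line even when $\sigma = 1/2$: one effectively evaluates at $s' = 1/2 + \max(\sigma - 1/2,1/\log x) + it$ and uses monotonicity of $\log|L|$ along with the nonnegativity just described.

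The third step is to bound the archimedean and conductor contributions by $\frac{\log q + \log^+ t}{\log x} + O(1/\log x)$. This comes from estimating $\frac{\Gamma'}{\Gamma}$ at the relevant points via Stirling: the conductor contributes $\tfrac12\log(q/\pi)$-type terms and the Gamma-factor contributes $\log(2+|t|)$-type terms, and after integrating against the kernel of total mass $O(1/\log x)$ one obtains the stated bound; the $+O(1/\log x)$ absorbs the lower-order Stirling error and the contribution of the trivial zeros. I would also need to check the tail of the Dirichlet series beyond $x$ — the truncation error from replacing the full sum by $n\leq x$ — is likewise $O(1/\log x)$, which follows from the rapid decay of the kernel $1/w^2$ and a trivial bound $\Lambda(n)\leq \log n$.

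The main obstacle, and the only genuinely delicate point, is verifying the sign of the zero contribution, i.e.\ that the chosen kernel has nonnegative real part at every $s - \rho$ with $\Re(s - \rho)\geq 0$; everything else is bookkeeping with Stirling's formula and Perron truncation. Since this is \cite[Proposition 2.3]{Munsch17}, in the write-up I would cite that reference for the full details and indicate only that the inequality follows from the explicit formula together with GRH in the manner sketched above.
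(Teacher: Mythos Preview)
Your proposal is correct and ultimately lands in the same place as the paper: both rely on \cite[Proposition 2.3]{Munsch17}. The paper's proof is considerably more economical, however---it does not re-derive the Soundararajan explicit-formula machinery at all, but simply cites Munsch and handles the exponent $\max(\sigma-1/2,1/\log x)$ by a clean two-case split: for $\sigma \geq 1/2+1/\log x$ one applies \cite[(2.8)]{Munsch17} directly at $s_0=\sigma+it$ (dropping the nonnegative zero-term $F_\chi(s_0)$), while for $\sigma < 1/2+1/\log x$ one invokes \cite[Proposition 2.3]{Munsch17} with $\sigma_0=1/2+1/\log x$. Your phrasing ``uses monotonicity of $\log|L|$'' for this step is slightly imprecise---what is really used in the second case is that Munsch's Proposition 2.3 already bounds $\log|L(\sigma+it,\chi)|$ for any $\sigma\geq 1/2$ in terms of a sum at the shifted abscissa $\sigma_0$, with the difference absorbed by the nonnegativity of the zero contribution rather than any monotonicity of $|L|$ itself. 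Since you plan to cite Munsch anyway, you may as well drop the sketch of the explicit-formula derivation and adopt the paper's two-line case analysis.
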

\begin{proof}
    If $\sigma \geq 1/2+1/\log x$, then we apply \cite[(2.8)]{Munsch17} by setting $s_0=\sigma+it$ there and note that the contribution from  terms involving $F_{\chi}(s_0)$ is negative.  This leads the exponent of $\sigma +it$ on $n$ in the denominator of the summands on the right-hand side of \eqref{variant}.  If $\sigma < 1/2+1/\log x$, then we set $\sigma_0=1/2+1/\log x$ in \cite[Proposition 2.3]{Munsch17} to obtain the exponent of $1/2+1/\log x$ for the afore-mentioned $n$.   Thus the proof is complete upon combining the bounds.
\end{proof}

  We apply the above lemma and argue as in the proof of \cite[Lemma 3]{Szab} to deduce the following bound for sums of $\log |L(\sigma+it,\chi^{(8d)})|$ over various $t$.
\begin{lemma}
\label{lemmasum}
  Let $k$ be a positive integer and let $Q,a_1,a_2,\ldots, a_{k}$ be fixed positive real constants, $x\geq 2$.  Set $a:=a_1+\cdots+ a_{k}$.  Suppose $X$ is a large real number, $d$ any positive, odd and square-free number with $d \leq X$, $\sigma \geq 1/2$ and $t_1,\ldots, t_{k}$ be fixed real numbers with $|t_i|\leq X^Q$. For any integer $n$, let
$$h(n)=:\frac{1}{2}\Re \Big( \sum^{k}_{m=1}a_mn^{-it_m} \Big).$$
Then, we have, under GRH,
\begin{align}
\label{mainupper}
\begin{split}
 \sum^{k}_{m=1} & a_m\log |L(\sigma+it_m,\chi^{(8d)})| \\
    \leq & 2 \sum_{p\leq x} \frac{h(p)\chi^{(8d)}(p)}{p^{1/2+\max(\sigma-1/2, 1/\log x)}}\frac{\log x/p}{\log x}
    +\sum_{p\leq x^{1/2}} \frac{h(p^2)\chi^{(8d)}(p^2)}{p^{1+2\max(\sigma-1/2, 1/\log x)}}+(Q+1)a\frac{\log X}{\log x}+O(1).
\end{split}
\end{align}
\end{lemma}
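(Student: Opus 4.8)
The plan is to apply Lemma \ref{up} to each factor $\log|L(\sigma+it_m,\chi^{(8d)})|$ with the conductor $q = 8d \leq 8X$, sum the resulting inequalities against the weights $a_m$, and then reorganize the prime-power sums. First I would note that since $d \leq X$, we have $\log q = \log(8d) \ll \log X$, and since $|t_m| \leq X^Q$ we have $\log^+|t_m| \leq Q\log X + O(1)$; hence for each $m$ the error term $(\log q + \log^+|t_m|)/\log x + O(1/\log x)$ from \eqref{variant} is at most $(Q+1)\log X/\log x + O(1)$ up to the constant absorbed in the implied constant, and summing over $m$ with weights $a_m$ produces the term $(Q+1)a\,\log X/\log x + O(1)$ (after adjusting the $O(1)$ by a factor depending on $a$, which is permissible since $a$ is fixed). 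The main term from \eqref{variant}, summed with weights, is
\[
\Re\sum_{m=1}^{k} a_m \sum_{n \leq x}\frac{\chi^{(8d)}(n)\Lambda(n)}{n^{1/2+\max(\sigma-1/2,1/\log x)+it_m}\log n}\cdot\frac{\log x/n}{\log x}.
\]
Interchanging the two sums and pulling the weight $n^{-it_m}$ together with the $a_m$ into the definition of $h$, this becomes
\[
2\sum_{n\leq x}\frac{h(n)\chi^{(8d)}(n)\Lambda(n)}{n^{1/2+\max(\sigma-1/2,1/\log x)}\log n}\cdot\frac{\log x/n}{\log x},
\]
using that $h(n) = \tfrac12\Re\big(\sum_m a_m n^{-it_m}\big)$ and that $\Re(zw) $ summed appropriately gives the real part of the whole expression.

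Next I would split the von Mangoldt sum according to whether $n$ is a prime, a prime square, or a higher prime power. For $n = p$ prime we have $\Lambda(p)/\log p = 1$, and discarding the harmless factor $\log(x/p)/\log x \leq 1$ (which is legitimate since $h(p)\chi^{(8d)}(p)$ need not be positive — here one must be slightly careful) — actually the cleaner route is to keep the factor $\log(x/p)/\log x$ on the prime term exactly as it appears in \eqref{mainupper}, so no discarding is needed for the $n=p$ contribution; it matches the first displayed sum on the right of \eqref{mainupper} verbatim. For $n = p^2$ we have $\Lambda(p^2)/\log(p^2) = 1/2$, the range is $p \leq x^{1/2}$, and the exponent $n^{1/2+\max(\sigma-1/2,1/\log x)} = p^{1+2\max(\sigma-1/2,1/\log x)}$; together with the factor $\tfrac12$ from $\Lambda/\log$ this produces, after the overall factor $2$, exactly the second displayed sum in \eqref{mainupper} — except for the extra factor $\log(x/p^2)/\log x \leq 1$, which here we may bound trivially by $1$ only if the summand is nonnegative, which it is not in general; instead one bounds $|h(p^2)| \leq a/2$ and $|\log(x/p^2)/\log x| \leq 1$ and observes that the full $p^2$-sum with the cutoff factor differs from the $p^2$-sum without it by at most $O\big(\sum_{p} p^{-1-2/\log x}\big) = O(1)$ uniformly, so the cutoff factor may be dropped at the cost of an $O(1)$ absorbed into the error. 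Finally, for $n = p^j$ with $j \geq 3$ the contribution is bounded in absolute value by $\tfrac{a}{2}\sum_{j\geq 3}\sum_{p}p^{-j/2} = O(1)$, again absorbed.

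The step I expect to be the main (though minor) obstacle is the careful accounting of the cutoff weights $\log(x/n)/\log x$ on the prime-square and higher-prime-power terms: because the coefficients $h(p^2)\chi^{(8d)}(p^2)$ are not of a fixed sign, one cannot simply bound $\log(x/n)/\log x \leq 1$ termwise to remove it, and one instead argues that removing the cutoff changes the sum by an absolutely convergent (hence $O(1)$) quantity. Everything else is bookkeeping: matching conductors, converting $\Lambda(p)/\log p$ and $\Lambda(p^2)/\log(p^2)$ to the constants $1$ and $\tfrac12$, and collecting the various $O(1)$ and $(Q+1)a\log X/\log x$ contributions. This is exactly the argument of \cite[Lemma 3]{Szab} transplanted to the quadratic family, with Lemma \ref{up} supplying the per-$L$-function input in place of the analogous bound there.
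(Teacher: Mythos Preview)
Your approach is correct and coincides with the paper's, which simply says to apply Lemma~\ref{up} and argue as in \cite[Lemma~3]{Szab}. One small fix in the step you flagged as the obstacle: bounding $|1-\log(x/p^2)/\log x|\le 1$ yields only $O\big(\sum_{p\le x^{1/2}}p^{-1-2/\log x}\big)=O(\log\log x)$ when $\sigma=1/2$, not $O(1)$; instead use $1-\log(x/p^2)/\log x=2\log p/\log x$ together with \eqref{mertenpartialsummation} to get $\sum_{p\le x^{1/2}}\tfrac{a\log p}{p\log x}=O(1)$.
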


   We note that
\begin{align}
\label{sump}
\begin{split}
 \sum_{p\leq x^{1/2}} \frac{h(p^2)\chi^{(8d)}(p^2)}{p^{1+2\max(\sigma-1/2, 1/\log x)}}=& \sum_{p\leq x^{1/2}} \frac{h(p^2)}{p^{1+2\max(\sigma-1/2, 1/\log x)}}
 -\sum_{\substack{p\leq x^{1/2} \\ p|d}} \frac{h(p^2)}{p^{1+2\max(\sigma-1/2, 1/\log x)}} \\
 \leq & \sum_{p\leq x^{1/2}} \frac{h(p^2)}{p^{1+2\max(\sigma-1/2, 1/\log x)}}+a\sum_{\substack{p|d}} \frac{1}{2p}.
\end{split}
\end{align}
  Using the inequality $x \leq -\log(1-x)$ for any $0<x<1$, we see that the last expression above is
\begin{align}
\label{sump1}
\begin{split}
 \leq \sum_{p\leq x^{1/2}} \frac{h(p^2)}{p^{1+2\max(\sigma-1/2, 1/\log x)}}-a\sum_{\substack{p|d}} \log (1- \frac{1}{2p}).
\end{split}
\end{align}

Upon applying \eqref{Addef}, \eqref{sump}--\eqref{sump1} in \eqref{mainupper}, we immediately deduce the following simplified version of Lemma \ref{lemmasum}.
\begin{proposition}
\label{lem: logLbound}
Keep the notations of Lemma \ref{lemmasum} and assume the truth of GRH, we have
\begin{align}
\label{logLupperbound}
\begin{split}
\sum^{k}_{m=1} & a_m\log |A(d)L(\sigma+it_m,\chi^{(8d)})| \\
    \leq & 2 \sum_{p\leq x} \frac{h(p)\chi^{(8d)}(p)}{p^{1/2+\max(\sigma-1/2, 1/\log x)}}\frac{\log x/p}{\log x}+
    \sum_{p\leq x^{1/2}} \frac{h(p^2)}{p^{1+2\max(\sigma-1/2, 1/\log x)}}+(Q+1)a\frac{\log X}{\log x}+O(1).
\end{split}
\end{align}
\end{proposition}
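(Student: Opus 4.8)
The plan is to deduce Proposition \ref{lem: logLbound} directly from Lemma \ref{lemmasum}, the point being that the extra factor $A(d)$ placed inside each $L$-value has been chosen precisely so as to absorb the ``$p\mid d$'' defect appearing in the prime-square sum of \eqref{mainupper}. Since $A(d)>0$ for every odd square-free $d$, we may split $\log|A(d)L(\sigma+it_m,\chi^{(8d)})| = \log A(d) + \log|L(\sigma+it_m,\chi^{(8d)})|$ for each $m$; multiplying by $a_m$ and summing over $1\le m\le k$ gives, with $a:=a_1+\cdots+a_k$,
\[
\sum_{m=1}^{k} a_m\log\bigl|A(d)L(\sigma+it_m,\chi^{(8d)})\bigr| \;=\; a\log A(d) \;+\; \sum_{m=1}^{k} a_m\log\bigl|L(\sigma+it_m,\chi^{(8d)})\bigr| ,
\]
and by the definition \eqref{Addef} one has $a\log A(d)=a\sum_{p\mid d}\log\bigl(1-\tfrac1{2p}\bigr)$.

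Starting from Lemma \ref{lemmasum}, which bounds $\sum_{m=1}^{k} a_m\log|L(\sigma+it_m,\chi^{(8d)})|$ by the right-hand side of \eqref{mainupper}, I would add $a\log A(d)$ to both sides. By the display above, the left-hand side becomes $\sum_{m=1}^{k} a_m\log|A(d)L(\sigma+it_m,\chi^{(8d)})|$, which is the left-hand side of \eqref{logLupperbound}. On the right-hand side, writing $P:=\max(\sigma-1/2,1/\log x)$ for brevity, I group $a\log A(d)$ with the prime-square term and invoke \eqref{sump}--\eqref{sump1}: removing the ramified primes from $\sum_{p\le x^{1/2}}h(p^2)\chi^{(8d)}(p^2)p^{-1-2P}$ costs at most $a\sum_{p\mid d}\tfrac1{2p}$ (here one uses the triangle-inequality bound $|h(p^2)|\le\tfrac12\sum_{m}a_m=\tfrac a2$ together with $P\ge 0$), and then $\tfrac1{2p}\le-\log(1-\tfrac1{2p})$ turns this cost into $-a\sum_{p\mid d}\log(1-\tfrac1{2p})=-a\log A(d)$. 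Hence
\[
\sum_{p\le x^{1/2}}\frac{h(p^2)\chi^{(8d)}(p^2)}{p^{1+2P}}+a\log A(d)\;\le\;\sum_{p\le x^{1/2}}\frac{h(p^2)}{p^{1+2P}},
\]
the right-hand side being independent of $d$. Since the other terms on the right of \eqref{mainupper}, namely $2\sum_{p\le x}h(p)\chi^{(8d)}(p)p^{-1/2-P}\tfrac{\log x/p}{\log x}$, the quantity $(Q+1)a\tfrac{\log X}{\log x}$ and the $O(1)$, are untouched by adding $a\log A(d)$, the resulting inequality is precisely \eqref{logLupperbound}.

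There is essentially no obstacle here: the whole argument is bookkeeping around the identity $\log A(d)=\sum_{p\mid d}\log(1-\tfrac1{2p})$. The two points to watch are (i) that every discarded ``$p\mid d$'' contribution must be pushed in the direction that enlarges the upper bound, and (ii) that $h(p^2)$ can be negative, so one bounds $|h(p^2)|\le a/2$ rather than simply dropping terms. It is worth stressing that this reformulation is exactly what will make the estimate compatible, in the arguments that follow, with the mean-value identity \eqref{sumAd}, whose Euler factors carry the weight $A(d)^{-k}$.
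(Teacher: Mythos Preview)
Your proof is correct and follows essentially the same approach as the paper: both arguments add $a\log A(d)$ to \eqref{mainupper} and use the chain \eqref{sump}--\eqref{sump1} (i.e.\ $|h(p^2)|\le a/2$ together with $\tfrac{1}{2p}\le -\log(1-\tfrac{1}{2p})$) to absorb the ramified-prime defect into the definition \eqref{Addef} of $A(d)$. The only difference is cosmetic ordering---you split $\log|A(d)L|$ first and then invoke \eqref{sump}--\eqref{sump1}, whereas the paper presents \eqref{sump}--\eqref{sump1} first and then moves $-a\log A(d)$ to the left.
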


   We also note the following upper bounds on moments of quadratic $L$-functions, which can be obtained by modifying the proof of \cite[Theorem 2]{Harper}.
\begin{lemma}
\label{prop: upperbound}
With the notation as above and the truth of GRH, let $k\geq 1$ be a fixed integer and ${\bf a}=(a_1,\ldots ,a_{k}),\ t=(t_1,\ldots ,t_{k})$
 be real $k$-tuples such that $a_i \geq 0$ for all $i$.  Set $a=a_1+\cdots+ a_{k}$.  Then for large real number $X$ and $\sigma \geq 1/2$,
\begin{align*}
   \sumstar_{\substack{(d,2)=1 \\ d \leq X}}\big| L\big(\sigma+it_1,\chi^{(8d)} \big) \big|^{a_1} \cdots \big| L\big(\sigma+it_{k},\chi^{(8d)}  \big) \big|^{a_{k}} \ll_{{\bf a}} &  X(\log X)^{a(a+1)/2}.
\end{align*}
\end{lemma}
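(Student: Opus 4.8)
The plan is to follow the Soundararajan--Harper method as adapted to the quadratic family, exactly in the spirit of \cite[Theorem 2]{Harper} and \cite[Lemma 3]{Szab}, but keeping track of the shifts $t_1,\ldots,t_k$ only insofar as they appear through the combined linear form $h(n)=\tfrac12\Re\sum_m a_m n^{-it_m}$. Since Lemma \ref{prop: upperbound} is the unshifted-in-the-conclusion statement (the right side has no zeta factors), the shifts are genuinely present in the hypothesis but are estimated away trivially by $|h(p)|\leq a/2$ and $|h(p^2)|\leq a/2$; so morally this is the ordinary $a(a+1)/2$-moment bound for $|L(\sigma+it,\chi^{(8d)})|$ made uniform in $t$. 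First I would reduce to $\sigma=1/2$: for $\sigma\geq 1/2$ one has $|L(\sigma+it,\chi^{(8d)})|\leq$ (essentially) the value obtained from Proposition \ref{lem: logLbound}, whose right-hand side is, after bounding $h$, dominated by the $\sigma=1/2$ case up to the harmless $\max(\sigma-1/2,1/\log x)$ adjustments; so it suffices to prove the bound at the central point.

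Next I would set up the standard dyadic decomposition of the characters $\chi^{(8d)}$ according to the size of the relevant prime sum. Fix parameters: let $x=X^{1/\ell}$ for a large constant $\ell$, and introduce the telescoping scales $x_j$ (e.g. $x_0$ small, $x_j = X^{1/(\ell^{j})}$ or the usual $\exp((\log X)^{\theta_j})$ choice) with intervals $I_j=(x_{j-1},x_j]$. For each $d$, write $G_j(d)=2\sum_{p\in I_j} h(p)\chi^{(8d)}(p) p^{-1/2}\,(\log x/p)/\log x$ (truncated appropriately), so that Proposition \ref{lem: logLbound} gives $\sum_m a_m\log|A(d)L(1/2+it_m,\chi^{(8d)})|\leq \sum_j G_j(d) + (\text{contribution of }p^2) + O(1)$. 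The $p^2$-sum is $\leq \tfrac12\sum_{p} h(p^2)/p \leq \tfrac{a}{4}\sum_{p\leq x^{1/2}} 1/p = \tfrac a4\log\log X + O(1)$ by \eqref{merten}, which produces exactly a factor $(\log X)^{a/4}$ — wait, more carefully $\tfrac a4$ times $2$ from the $\exp$, so $(\log X)^{a/2}$; combined with the main term below this will assemble into $(\log X)^{a(a+1)/2}$. Partition the family $\{d\leq X,\ (d,2)=1,\ d \text{ squarefree}\}$ into the ``good'' set where every $G_j(d)$ is small (say $|G_j(d)|\leq \ell_j$ for suitable $\ell_j$) and, for each $j$, the ``bad at scale $j$'' set $\mathcal{T}_j$ where $|G_i(d)|\leq\ell_i$ for $i<j$ but $|G_j(d)|>\ell_j$.

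On the good set one bounds $\exp(\sum_m a_m\log|\cdots|)$ pointwise by $\exp(\sum_j \ell_j + \tfrac a2\log\log X + O(1))$ and multiplies by the count $\ll X$ from \eqref{meancharsum} with $n=1$; the key arithmetic input is that $\sum_j \ell_j$ can be kept to $O(\log\log X)$ with the right constant, namely so that the total exponent of $\log X$ coming from the good set is $\leq a^2/2 + a/2 + o(1)$ hmm — I need the good-set contribution to come out to roughly $(\log X)^{a^2/2}$ and the moment-of-the-prime-sum part (handled next) to contribute nothing extra, OR to split $a(a+1)/2 = a^2/4 + (a^2/4 + a/2)$ and get $a^2/4+a/2$ from the $p$-sum's second-moment and $a^2/4$ from... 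Let me not commit to the exact bookkeeping here; the point is that the threshold sequence $\ell_j$ is chosen exactly as in Harper so that the exponents telescope correctly. On each bad set $\mathcal{T}_j$ one uses: (i) the pointwise bound $\exp(\sum_m a_m\log|\cdots|)\leq \exp(\sum_{i<j}\ell_i + G_j(d) + \sum_{i>j}|G_i(d)| + O(\log\log X))$, then Cauchy--Schwarz (or Hölder with a high even moment) to separate the ``largeness'' of $G_j$ from the other factors; (ii) a high-moment estimate $\sum^\star_{d\leq X} G_j(d)^{2r}\ll X\,(r!)\,(\text{var})^r$ for $2r$ up to roughly $\log X/\log x_j$, proved by expanding the power, using \eqref{sumAdnven}--\eqref{meancharsum} to kill the non-square terms (only $d$-sums where the product of primes is a perfect square survive, giving $X$ times a combinatorial main term), and estimating the diagonal by Rankin's trick; combined with the largeness constraint $|G_j(d)|>\ell_j$ via Chebyshev, this makes $\sum_{d\in\mathcal{T}_j}(\cdots)$ negligible, i.e.\ the series over $j$ of bad contributions is $o(X(\log X)^{a(a+1)/2})$.

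The main obstacle — and the step requiring genuine care rather than routine bookkeeping — is verifying that the high even moments of the short Dirichlet polynomials $G_j(d)$ over the quadratic family obey the Gaussian-type bound $\ll X\cdot r!\cdot \big(\sum_{p\in I_j} h(p)^2/p\big)^r$ with a uniform (in $t$) implied constant, valid for $2r \lesssim \log X/\log x_j$. This is where one must invoke Lemma \ref{prsum}: expanding $G_j(d)^{2r}$ produces $\sum_{n} c(n)\chi^{(8d)}(n)$ with $n$ a product of $2r$ primes from $I_j$; summing over $d$ via \eqref{meancharsum} isolates the ``$n=\square$'' diagonal, which contributes the claimed $X\cdot(\text{moment of a Gaussian})$ main term (here $h(p)^2$ rather than $1$ appears because the coefficients are $h(p)$), while the off-diagonal is $O(X^{1/2+\varepsilon}n^{1/4+\varepsilon})$ and is controlled by the constraint $x_j^{2r}\leq X^{1/2-\delta}$, i.e.\ exactly the restriction $2r\leq (1/2-\delta)\log X/\log x_j$. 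One must also handle the ``$h(p)^2$ vs.\ $(\tfrac12\Re\sum a_m p^{-it_m})^2$'' bookkeeping so that the variance $\sum_{p\in I_j}h(p)^2/p$ is bounded by $\tfrac{a^2}4\log\log x_j + O(1)$ uniformly — this is where the shifts could in principle inflate the constant, but $|h(p)|\leq a/2$ makes it harmless for Lemma \ref{prop: upperbound}. Assembling the good-set main term $X(\log X)^{a^2/2 + a/2}$ hmm, or after the $p^2$-correction $X(\log X)^{a^2/4}\cdot(\log X)^{a^2/4+a/2} = X(\log X)^{a(a+1)/2}$, against the negligible bad-set tails, completes the proof. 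Everything here is a direct transcription of \cite{Harper} (for the structure and the moment bounds) and \cite{Szab} (for the quadratic family and the use of Proposition \ref{lem: logLbound}), with the shifts carried along trivially, so I would present it as ``the proof proceeds by modifying \cite[Theorem 2]{Harper} as in \cite[Lemma 3]{Szab}'' and only spell out the places where the shift-uniformity needs checking.
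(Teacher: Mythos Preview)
Your approach is correct and is exactly what the paper intends: the paper does not give a separate proof of this lemma but simply states that it ``can be obtained by modifying the proof of \cite[Theorem 2]{Harper}'', and the detailed machinery you sketch (Proposition \ref{lem: logLbound}, the Harper scale decomposition, Gaussian-type high moments of short Dirichlet polynomials via Lemma \ref{prsum}, and the good/bad-set splitting) is precisely the argument the paper itself carries out in full in Section 3 for the sharper Theorem \ref{t1}. Your exponent bookkeeping, while admittedly wobbly in presentation, lands correctly: the prime-squared term contributes $\exp\big(\sum_{p\leq x^{1/2}} h(p^2)/p\big)\leq (\log X)^{a/2}$ via $|h(p^2)|\leq a/2$, and the diagonal in the moment of the prime sum contributes $\exp\big(\tfrac12\sum_p (2h(p))^2/p\big)\leq (\log X)^{a^2/2}$ via $|2h(p)|\leq a$, giving $a^2/2+a/2=a(a+1)/2$.
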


\section{Proof of Theorem \ref{t1}}

We remark here that throughout our proof, the implied constants involved in various estimations using $\ll$ or the big-$O$ notations depend on ${\bf a}:=(a_1, \cdots, a_{k})$ only and are uniform with respect to $X$.  We set $a=\sum^k_{i=1}a_i$ and recall the convention that an empty product equals to $1$. \newline

Let $\Phi$ be a smooth, non-negative function such that $\Phi(x) \leq 1$ for all $x$ and that $\Phi$ is supported on $[1/4,3/2]$ satisfying $\Phi(x) =1$ for $x\in [1/2,1]$. Upon dividing $0<d \leq X$ into dyadic blocks, we see that in order to
 prove Theorem \ref{t1}, it suffices to show that for $\sigma=1/2$,
\begin{align}
\label{Lprodboundssmoothed}
\begin{split}
  \sumstar_{(d,2)=1} & \big| L\big(\sigma+it_1,\chi^{(8d)} \big) \big|^{a_1} \cdots \big| L\big(\sigma+it_{k},\chi^{(8d)}  \big) \big|^{a_{k}}\Phi \Big( \frac d{X} \Big) \\
\ll & X(\log X)^{(a_1^2+\cdots +a_{k}^2)/4} \\
& \times \prod_{1\leq j<l \leq k} \big|\zeta(1+i(t_j-t_l)+\frac 1{\log X}) \big|^{a_ja_l/2}\big|\zeta(1+i(t_j+t_l)+\frac 1{\log X}) \big|^{a_ja_l/2}\prod_{1\leq j\leq k} \big|\zeta(1+2it_j+\frac 1{\log X}) \big|^{a^2_j/4+a_j/2}.
\end{split}
\end{align}

We shall treat $\sigma$ as a fixed real number with $\sigma \geq 1/2$ instead of focusing only on the case of $\sigma=1/2$ in the most part of our proof in what follows since we would like to present a proof that is valid for a general $\sigma$.  Of course, Theorem \ref{t1} follows from \eqref{Lprodboundssmoothed} by setting $\sigma = 1/2$. \newline

 Following the ideas of A. J. Harper in \cite{Harper},  we define for a large number $M$, depending on ${\bf a}$ only,
\begin{align*}
\begin{split}
 \alpha_{0} = 0, \;\;\;\;\; \alpha_{j} = \frac{20^{j-1}}{(\log\log X)^{2}} \;\;\; \mbox{for all} \; j \geq 1, \quad
\mathcal{J} = \mathcal{J}_{{\bf a},X} = 1 + \max\{j : \alpha_{j} \leq 10^{-M} \} .
\end{split}
\end{align*}

Set $P_j=(X^{\alpha_{j-1}}, X^{\alpha_{j}}]$ for $1 \leq j \leq \mathcal{J}$. Lemma \ref{RS} gives that for $X$ large enough,
\begin{align}
\label{sumpj}
\begin{split}
 \sum_{p  \in P_{1}} \frac{1}{p} \leq & \log\log X=\alpha^{-1/2}_1, \; \mathcal{J}-j \leq  \frac{\log(1/\alpha_{j})}{\log 20} \quad \mbox{and}   \\
 \sum_{ p \in P_{j+1}} \frac{1}{p}
 =& \log \alpha_{j+1} - \log \alpha_{j} + o(1) =  \log 20 + o(1) \leq 10, \quad 1 \leq j \leq \mathcal{J}-1. 
\end{split}
\end{align}

Denote $\lceil x \rceil = \min \{ n \in \intz : n \geq x\}$ for any $x \in \rear$ and we define a sequence of even natural
  numbers $\{ \ell_j \}_{1 \leq j \leq \mathcal{J}}$ so that $\ell_j =2\lceil e^{B}\alpha^{-3/4}_j \rceil$ with $B$ being a large number depending on ${\bf a}$ only.  For any $x\in \rear$ and any non-negative integer $\ell$, we set
\begin{equation*}
E_{\ell}(x) = \sum_{j=0}^{\ell} \frac{x^{j}}{j!}.
\end{equation*}
Next, we define three functions $h(n, \sigma, x), h_1(n, \sigma, x)$ and $s(n, x)$, totally multiplicative in $n$, with their values at a prime $p$ given by
\begin{align} \label{hbound}
  h(p, \sigma, x)= \frac{2h(p)}{ap^{\max(\sigma-1/2, 1/\log x)}}, \quad h_1(p, \sigma, x)= \frac{4h(p^2)}{a^2p^{2\max(\sigma-1/2, 1/\log x)}}\quad   s(p,x)= \frac{\log (x/p)}{\log x}.
\end{align}

Note that for any integer $n \geq 1$,
\begin{align*}
\begin{split}
  |h(n, \sigma, x)| \leq 1.
\end{split}
\end{align*}

  Let $w(n)$ denote the multiplicative function such that $w(p^{\alpha}) = \alpha!$ for prime powers $p^{\alpha}$.  Setting $x=X^{\alpha_j}$ in \eqref{logLupperbound} renders that
\begin{align}
\label{basicest}
\begin{split}
 & \sum^{k}_{m=1}a_m\log |A(d)L(\sigma+it_m,\chi^{(8d)})| \le a\sum^{j}_{l=1} {\mathcal M}_{l,j}(d)+\frac {a^2}{4}\sum_{p\leq X^{\alpha_j/2}}
 \frac{h_1(p,\sigma, X^{\alpha_j})}{p}+(Q+1)a\alpha^{-1}_j+O(1),
\end{split}
\end{align}
  where
\[ {\mathcal M}_{l,j}(d) = \sum_{p\in P_l}\frac {h(p,\sigma, X^{\alpha_j})\chi^{(8d)}(p)}{\sqrt{p}}s(p, X^{\alpha_j}), \quad 1\leq l \leq j \leq \mathcal{J}. \]

  We also define the following sets:
\begin{align*}
  \mathcal{S}(0) =& \{ (d,2)=1 : |a{\mathcal M}_{1,l}(d)| > \frac {\ell_{1}}{10^3} \; \text{ for some } 1 \leq l \leq \mathcal{J} \} ,   \\
 \mathcal{S}(j) =& \{ (d,2)=1  : |a{\mathcal M}_{m,l}(d)| \leq
 \frac {\ell_{m}}{10^3},  \; \mbox{for all} \; 1 \leq m \leq j \; \mbox{and} \; m \leq l \leq \mathcal{J}, \\
 & \;\;\;\;\; \text{but }  |a{\mathcal M}_{j+1,l}(d)| > \frac {\ell_{j+1}}{10^3} \; \text{ for some } j+1 \leq l \leq \mathcal{J} \} ,  \quad  1\leq j \leq \mathcal{J}, \\
 \mathcal{S}(\mathcal{J}) =& \{(d,2)=1  : |a{\mathcal M}_{m,
\mathcal{J}}(d)| \leq \frac {\ell_{m}}{10^3} \; \forall 1 \leq m \leq \mathcal{J}\}.
\end{align*}

  Now we note that
\begin{align}
\label{S0est}
\begin{split}
 \sumstar_{\substack{(d,2)=1 \\ d \in \mathcal{S}(0)}}\Phi \Big( \frac d{X} \Big)   \leq &  \sumstar_{\substack{(d,2)=1}} \sum^{\mathcal{J}}_{l=1}
\Big ( \frac {10^3}{\ell_1}{|a\mathcal
M}_{1, l}(d)| \Big)^{2\lceil 1/(10^3\alpha_{1})\rceil }\Phi \Big( \frac d{X} \Big).
\end{split}
\end{align}

 Let $\Omega(n)$ be the number of prime powers dividing $n$.  These notations lead to
\begin{align}
\label{M1fbound}
\begin{split}
 \sumstar_{\substack{(d,2)=1}} & \sum^{\mathcal{J}}_{l=1}
\Big ( \frac {10^3}{\ell_1} {|a\mathcal
M}_{1, l}(d)| \Big)^{2\lceil 1/(10^3\alpha_{1})\rceil }\Phi \Big( \frac d{X} \Big) \\
= & \sum^{\mathcal{J}}_{l=1}
 \Big ( \frac {a \cdot 10^3}{\ell_1} \Big)^{2\lceil 1/(10^3\alpha_{1})\rceil }  \sum_{ \substack{ n \\ \Omega(n) = 2\lceil 1/(10^3\alpha_{1})\rceil \\ p|n \implies
p \in P_1}}
\frac{(2\lceil 1/(10^3\alpha_{1})\rceil  )!s(n, X^{\alpha_l})}{\sqrt{n}}\frac{h(n,\sigma, X^{\alpha_l})
  }{w(n)} \sumstar_{\substack{(d,2)=1}}  \chi^{(8d)}(n) \Phi \Big( \frac d{X} \Big).
\end{split}
\end{align}

   We apply Lemma \ref{prsum} to evaluate the inner-most sum over $d$ on the right-hand side of \eqref{M1fbound} and the observation that $|s(n,  X^{\alpha_l})| \leq 1$ for all $n$ appearing in the sum.  The contribution from the $O$-term in \eqref{meancharsum} is
\begin{align}
\label{S0bounderror}
\begin{split}
  \ll &  X^{1/2+\varepsilon}\mathcal{J} \Big ( \frac {a \cdot 10^3}{\ell_1} \Big)^{2\lceil 1/(10^3\alpha_{1})\rceil } \max_{1 \leq l \leq \mathcal{J}} \sum_{ \substack{ n \\ \Omega(n) = 2\lceil 1/(10^3\alpha_{1})\rceil \\ p|n \implies
p \in P_1}}
\frac{(2\lceil 1/(10^3\alpha_{1})\rceil  )!s(n,X^{\alpha_l})}{\sqrt{n}}\frac{|h(n,\sigma, X^{\alpha_l})| }{w(n)}n^{1/4+\varepsilon} \\
\ll &   X^{1/2+\varepsilon}\mathcal{J}\Big ( \frac {a \cdot  10^3}{\ell_1} \Big)^{2\lceil 1/(10^3\alpha_{1})\rceil } \Big(\sum_{p \in P_1} \frac{1}{p^{1/4-\varepsilon}} \Big )^{2\lceil 1/(10^3\alpha_{1})\rceil} \ll X e^{-(\log\log X)^{2}/20} .
\end{split}
\end{align}

Moreover, the contribution from the main term in \eqref{meancharsum}  is
\begin{align}
\label{S00bound}
\begin{split}
  \ll & X \mathcal{J} \Big ( \frac {a \cdot  10^3}{\ell_1} \Big)^{2\lceil 1/(10^3\alpha_{1})\rceil } \max_{1 \leq l \leq \mathcal{J}}\sum_{ \substack{ n=\square \\ \Omega(n) = 2\lceil 1/(10^3\alpha_{1})\rceil \\ p|n \implies
p \in P_1}}
\frac{(2\lceil 1/(10^3\alpha_{1})\rceil  )!s(n,X^{\alpha_l})}{\sqrt{n}}\frac{h(n,\sigma, X^{\alpha_l})
  }{w(n)}\prod_{p|n} \Big (\frac {p}{p+1}\Big ).
\end{split}
\end{align}

  Upon replacing $n$ by $n^2$ and noting that $s(n^2,X^{\alpha_l})=s(n,X^{\alpha_l})^2$, $h(n^2,\sigma, X^{\alpha_l})=h(n,\sigma, X^{\alpha_l})^2 \geq 0$, $1/w(n^2) \leq 1/w(n)$, and $p/(p+1) \leq 1$, we deduce that \eqref{S00bound} is
\begin{align}
\label{S0bound1}
\begin{split}
  \ll & X \mathcal{J} \Big ( \frac {a \cdot  10^3}{\ell_1} \Big)^{2\lceil 1/(10^3\alpha_{1})\rceil } \max_{1 \leq l \leq \mathcal{J}} \sum_{ \substack{ n \\ \Omega(n) = \lceil 1/(10^3\alpha_{1})\rceil \\ p|n \implies
p \in P_1}}
\frac{(2\lceil 1/(10^3\alpha_{1})\rceil  )!s(n,X^{\alpha_l})^2}{n}\frac{h(n,\sigma, X^{\alpha_l})^2
  }{w(n)}\\
\ll &  X\mathcal{J} \Big ( \frac {a \cdot 10^3}{\ell_1} \Big)^{2\lceil 1/(10^3\alpha_{1})\rceil }
\frac {(2\lceil 1/(10^3\alpha_{1})\rceil  )!}{\lceil 1/(10^3\alpha_{1})\rceil !}\max_{1 \leq l \leq \mathcal{J}} \Big (\sum_{p \in P_1 }
\frac {h(p,\sigma, X^{\alpha_l})^2 s(p,X^{\alpha_l})^2}{p}\Big )^{\lceil 1/(10^3\alpha_{1})\rceil}.
\end{split}
\end{align}

Now Stirling's formula (see \cite[(5.112)]{iwakow}) implies that
\begin{align}
\label{Stirling}
\begin{split}
 \Big( \frac me \Big)^m \leq m! \leq \sqrt{m} \Big( \frac {m }{e} \Big)^{m}.
\end{split}
\end{align}

Thus, \eqref{Stirling}, together with the observation that $s(p,X^{\alpha_l})\leq 1$, and \eqref{hbound} imply that the last expression in \eqref{S0bound1} is
\begin{align}
\label{S0bound2}
\begin{split}
\ll &  X\mathcal{J} \lceil 1/(10^3\alpha_{1})\rceil \Big ( \frac {a \cdot 10^3}{\ell_1} \Big)^{2\lceil 1/(10^3\alpha_{1})\rceil }\Big(\frac {(4\lceil 1/(10^3\alpha_{1})\rceil  )}{e}\Big )^{\lceil 1/(10^3\alpha_{1})\rceil}\Big (\sum_{p \leq X^{\alpha_1} }\frac {1}{p}\Big )^{\lceil 1/(10^3\alpha_{1})\rceil}.
\end{split}
\end{align}

Lemma \ref{RS} and \eqref{sumpj} yield that, upon taking $B$ large enough, \eqref{S0bound2} is
 \begin{align}
\label{S0upperbound}
\begin{split}
 \ll  Xe^{-\alpha_1^{-1}/20}=Xe^{-(\log\log X)^{2}/20}  .
\end{split}
\end{align}

  We conclude from \eqref{S0est}, \eqref{S0bounderror} and \eqref{S0upperbound} that
\begin{align}
\label{S0est1}
\begin{split}
 \sumstar_{\substack{(d,2)=1 \\ d \in \mathcal{S}(0)}}\Phi \Big( \frac d{X} \Big)  \ll Xe^{-(\log\log X)^{2}/20}.
\end{split}
\end{align}

Via H\"older's inequality, we arrive at
\begin{align}
\label{LS0bound}
\begin{split}
  \sumstar_{\substack{(d,2)=1 \\d \in S(0) }} & \big| L\big(\sigma+it_1,\chi^{(8d)} \big) \big|^{a_1} \cdots \big| L\big(\sigma+it_{k},\chi^{(8d)}  \big) \big|^{a_{k}} \Phi \Big( \frac d{X} \Big) \\
\leq &  \Big ( \sumstar_{\substack{(d,2)=1 \\d \in S(0) }} \Phi \Big( \frac d{X} \Big)  \Big )^{1/2} \Big (
 \sumstar_{\substack{(d,2)=1 }}\big| L\big(\sigma+it_1,\chi^{(8d)} \big) \big|^{2a_1} \cdots \big| L\big(\sigma+it_{k},\chi^{(8d)}  \big) \big|^{2a_{k}} \Phi \Big( \frac d{X} \Big) \Big)^{1/2}.
\end{split}
\end{align}

  Note that by \cite[Theorem 6.7]{MVa1}, we have for $|t_j|, |t_l|\leq X^A$ and $X$ large enough, 
\begin{align*}
 \big|\zeta(1+i(t_j-t_l)+\frac 1{\log X}) \big| \gg \min (\frac 1{\log X}, \frac {1}{\log (|t_j-t_l|+4)}) \gg \frac 1{\log X}.
\end{align*}  
  
  It then follows from Lemma \ref{prop: upperbound}, \eqref{S0est1} and \eqref{LS0bound} that
\begin{align*}
\sumstar_{\substack{(d,2)=1 \\d \in S(0) }} & \big| L\big(\sigma+it_1,\chi^{(8d)} \big) \big|^{a_1} \cdots \big| L\big(\sigma+it_{k},\chi^{(8d)}  \big) \big|^{a_{k}}
 \Phi \Big( \frac d{X} \Big) \\
   \ll & X(\log X)^{(a_1^2+\cdots +a_{k}^2)/4}  \\
   & \times \prod_{1\leq j<l \leq k} \big|\zeta(1+i(t_j-t_l)+\frac 1{\log X}) \big|^{a_ia_j/2}\big|\zeta(1+i(t_j+t_l)+\frac 1{\log X}) \big|^{a_ia_j/2}\prod_{1\leq j\leq k} \big|\zeta(1+2it_j+\frac 1{\log X}) \big|^{a^2_i/4+a_i/2}.
\end{align*}

Thus we may focus on $d \in S(j)$ with $j \geq 1$ and to establish \eqref{Lprodboundssmoothed} and it suffices to show that
\begin{align}
\label{sumovermj}
\begin{split}
\sum_{j=1}^{\mathcal{J}} \ & \sumstar_{\substack{(d,2)=1 \\d \in S(j) }}  \big| L\big(\sigma+it_1,\chi^{(8d)} \big) \big|^{a_1} \cdots \big| L\big(\sigma+it_{k},\chi^{(8d)}  \big) \big|^{a_{k}}  \Phi \Big( \frac d{X} \Big) \\
   \ll & X(\log X)^{(a_1^2+\cdots +a_{k}^2)/4} \\
   & \times \prod_{1\leq j<l \leq k} \big|\zeta(1+i(t_j-t_l)+\frac 1{\log X}) \big|^{a_ia_j/2}\big|\zeta(1+i(t_j+t_l)+\frac 1{\log X}) \big|^{a_ia_j/2}\prod_{1\leq j\leq k} \big|\zeta(1+2it_j+\frac 1{\log X}) \big|^{a^2_i/4+a_i/2}.
\end{split}
\end{align}

We shall deal with the sum over $j$ in \eqref{sumovermj} individually, i.e. by working with a fixed $j$ with $1 \leq j \leq \mathcal{J}$.  From \eqref{basicest}, 
\begin{align}
\label{zetaNbounds}
\begin{split}
 \big| L  \big(\sigma+it_1, & \chi^{(8d)} \big) \big|^{a_1} \cdots \big| L\big(\sigma+it_{k},\chi^{(8d)}  \big) \big|^{a_{k}}\Phi(\frac {d}{X}) \\
\ll & A(d)^{-a}\exp \left(\frac {(Q+1)a}{\alpha_j} \right)\exp \left(\sum_{p\leq X^{\alpha_j/2}} \frac{a^2h_1(p,\sigma, X^{\alpha_j})}{4p} \right) \exp \Big (
 a\sum^j_{l=1}{\mathcal M}_{l,j}(d)\Big )\Phi \Big( \frac d{X} \Big).
\end{split}
\end{align}

The Taylor formula with integral remainder implies that for any $z \in \rear$,
\begin{align*}
\begin{split}
 \Big|e^z-\sum^{n-1}_{j=0}\frac {z^j}{j!}\Big| =& \Big|\frac 1{(n-1)!}\int\limits^z_0e^t(z-t)^{n-1} \dif t\Big|=\Big|\frac {z^n}{(n-1)!}\int\limits^1_0 e^{zs}(1-s)^{n-1} \dif s\Big| \\
\leq & \frac {|z|^n}{n!}\max (1, e^{\Re z}) \leq \frac {|z|^n}{n!}e^z \max (e^{-z}, e^{\Re z-z}) \leq \frac {|z|^n}{n!}e^z e^{|z|}.
\end{split}
 \end{align*}

This computation reveals that
\begin{align}
\label{ezrelation}
\begin{split}
  \sum^{n-1}_{j=0}\frac {z^j}{j!} =e^z \Big( 1+O \Big( \frac {|z|^n}{n!}e^{|z|} \Big) \Big).
\end{split}
 \end{align}

   We set $z=a {\mathcal M}_{l,j}(d)$, $n=\ell_l+1$ in \eqref{ezrelation} and apply \eqref{Stirling} to deduce that
\begin{align*}
\begin{split}
 E_{\ell_l}( a{\mathcal M}_{l,j}(d))  =\exp \Big ( a {\mathcal M}_{l,j}(d) \Big )\left( 1+ O\Big (\exp ( |a{\mathcal M}_{l,j}(d)| )
 \left( \frac{e|a{\mathcal M}_{l,j}(d)|}{ \ell_l+1} \right)^{ \ell_l+1}  \Big)  \right).
\end{split}
\end{align*}

  As $|a {\mathcal M}_{l,j}(d)| \le \ell_l/10^3$ for $d \in \mathcal{S}(j)$, the above simplifies to
\begin{align*}
 E_{\ell_l}( a{\mathcal M}_{l,j}(d)) =\exp \Big ( a {\mathcal M}_{l,j}(d) \Big ) \left( 1+   O(e^{-\ell_l}) \right).
\end{align*}

Hence
\begin{align*}
\begin{split}
\exp \Big ( a {\mathcal M}_{l,j}(d) \Big )  =E_{\ell_l}( a{\mathcal M}_{l,j}(d)) \left( 1+   O(e^{-\ell_l}) \right).
\end{split}
 \end{align*}

Inserting the above into \eqref{zetaNbounds}, we get
\begin{align*}
\begin{split}
  & \big| L\big(\sigma+it_1,\chi^{(8d)} \big) \big|^{a_1} \cdots \big| L\big(\sigma+it_{k},\chi^{(8d)}  \big) \big|^{a_{k}} \Phi(\frac {d}{X}) \\
\ll &  \exp \left(\frac {(Q+1)a}{\alpha_j} \right)\exp \left(\sum_{p\leq X^{\alpha_j/2}} \frac{a^2h_1(p,\sigma, X^{\alpha_j})}{4p} \right)A(d)^{-a}\prod^{j}_{l=1} \Big (1+O\big(e^{-\ell_l}\big)\Big )\prod^{j}_{l=1} E_{\ell_l}( a{\mathcal M}_{l,j}(d)) \Phi \Big( \frac d{X} \Big) \\
\ll & \exp \left(\frac {(Q+1)a}{\alpha_j} \right)\exp \left(\sum_{p\leq X^{\alpha_j/2}}\frac{a^2h_1(p,\sigma, X^{\alpha_j})}{4p} \right) A(d)^{-a}\prod^{j}_{l=1}E_{\ell_l}( a{\mathcal M}_{l,j}(d)) \Phi \Big( \frac d{X} \Big) .
\end{split}
\end{align*}
Note here, using $1+x \leq e^x$ for all $x \in \rear$ and the bounds in \eqref{sumoverell}, we get, for some large constant $C$,
\[ \prod^{j}_{l=1} \Big (1+O\big(e^{-\ell_l}\big) \Big ) \ll \exp \Big( C \sum_l e^{-\ell_l}\Big) \ll \exp \Big( C \sum_l \frac 1{\ell_l}\Big) \ll 1 , \]
where the last estimation above follows by noting that $\displaystyle \sum_l \frac 1{\ell_l}$ converges (being essentially a geometric series). \newline

   We then deduce from the description on $\mathcal{S}(j)$ and the above that when $j \geq 1$,
\begin{align}
\label{upperboundprodE0}
\begin{split}
\sumstar_{\substack{(d,2)=1 \\d \in S(j) }}  \big| L &\big(\sigma+it_1,\chi^{(8d)} \big) \big|^{a_1} \cdots \big| L\big(\sigma+it_{k},\chi^{(8d)}  \big) \big|^{a_{k}} \Phi \Big( \frac d{X} \Big)  \\
\ll & \exp \left(\frac {(Q+1)a}{\alpha_j} \right)\exp \left(\sum_{p\leq X^{\alpha_j/2}} \frac{a^2h_1(p,\sigma, X^{\alpha_j})}{4p} \right) \sum^{ \mathcal{J}}_{u=j+1}S_u,
\end{split}
\end{align}
  where
\begin{align*}
\begin{split}
S_u =: \sumstar_{\substack{(d,2)=1 }}A(d)^{-a}\prod^{j}_{l=1}E_{\ell_l}(a{\mathcal M}_{l,j}(d))  \Big ( \frac {10^3}{\ell_{j+1}}|a {\mathcal M}_{j+1,u}(d)|\Big)^{2\lceil 1/(10\alpha_{j+1})\rceil } \Phi \Big( \frac d{X} \Big).
\end{split}
\end{align*}

  We now focus on the evaluation of $S_u$ for a fixed $u$ by expanding the factors in $S_u$ into Dirichlet series.  To this end, we define functions $b_j(n)$, $1 \leq j \leq \mathcal{J}$ such that $b_j(n)=0$ or $1$ and $b_j(n)=1$ if and only if $\Omega(n) \leq \ell_j$ and the primes dividing $n$ are all from the interval $P_j$.
    We use these notations to write $E_{\ell_l}( a{\mathcal M}_{l,j}(d))$ as
\begin{equation*}
E_{\ell_l}( a{\mathcal M}_{l,j}(d)) = \sum_{n_l} \frac{h(n_l,\sigma, X^{\alpha_l})s(n_l,X^{\alpha_j})}{\sqrt{n_l}} \frac{a^{\Omega(n_l)}}{w(n_l)}  b_l(n_l)\chi^{(8d)}(n_l) , \quad 1\le l\le j.
\end{equation*}
If $X$ is large enough,
\begin{align} \label{sumoverell}
 \mathcal{J} \ll \log \log \log X, \quad \sum^{\mathcal{J}}_{j=1}\ell_j \leq 4e^B(\log \log X)^{3/2} \quad \mbox{and} \quad \sum^{\mathcal{J}}_{j=1} \alpha_{j}\ell_j \leq 40e^B 10^{-M/4}.
\end{align}
 It follows that $E_{\ell_l}( k{\mathcal M}_{l,j}(d)) $ is a short Dirichlet polynomial since $b_l(n_l)=0$ unless $n_l \leq
    (X^{\alpha_l})^{\ell_l}$. This implies that $\prod^{j}_{l=1}E_{\ell_l}( a{\mathcal M}_{l,j}(d))$ is also a short Dirichlet
    polynomial whose length is at most $X^{\sum^{\mathcal{J}}_{j=1} \alpha_{j}\ell_j} < X^{40e^B 10^{-M/4}}$ by \eqref{sumoverell}. We then write for simplicity that
\begin{align}
\label{Eprodexpression}
 \prod^{j}_{l=1}E_{\ell_l}( a{\mathcal M}_{l,j}(d))= \sum_{n  \leq X^{40e^B 10^{-M/4}}} \frac{x_n}{\sqrt{n}} \chi^{(8d)}(n),
\end{align}
where $x_n$ denotes the coefficient of the Dirichlet polynomial resulting from the expansion of the left-hand side of \eqref{Eprodexpression}.  Now \eqref{sumoverell} and the observation that $s(n,X^{\alpha_j})\leq 1$, together with \eqref{hbound}, lead to
\begin{align}
\label{xnbounds}
 x_n \ll a^{\sum^{j}_{l=1}\ell_l} \ll X^{\varepsilon}.
\end{align}

 On the other hand, similar to \eqref{M1fbound},
\begin{align}
\label{Mjuexp}
\begin{split}
 & \Big (   \frac {10^3}{\ell_{j+1}}|a{\mathcal M}_{j+1,u}(d)|\Big )^{2\lceil 1/(10^3\alpha_{j+1})\rceil } \\
= &  \Big ( \frac {a \cdot 10^3}{\ell_{j+1}} \Big)^{2\lceil /(10^3\alpha_{j+1})\rceil } \sum_{ \substack{ n_u \\ \Omega(n_u) = 2\lceil 1/(10^3\alpha_{j+1})\rceil \\ p|n_u \implies
p \in P_{j+1}}}
\frac{(2\lceil 1/(10^3\alpha_{j+1})\rceil  )!s(n_u, X^{\alpha_u})}{\sqrt{n_u}}\frac{h(n_u,\sigma, X^{\alpha_u})
  }{w(n_u)}\chi^{(8d)}(n_u) =:  \sum_{n_u} \frac {y_{n_u}}{\sqrt{n_u}}\chi^{(8d)}(n_u).
\end{split}
\end{align}
Note that $n_u \leq (X^{\alpha_{j+1}})^{2\lceil 1/(10^3\alpha_{j+1})\rceil } \leq X^{1/10^2}$.  The inequality $s(n,X^{\alpha_u})\leq 1$ and \eqref{hbound} again,
together with \eqref{Stirling}, reveals
\begin{align}
\label{ynbound}
\begin{split}
   y_{n_u} \ll \lceil 1/(10^3\alpha_{j+1})\rceil \Big ( \frac {2a\cdot 10^3 \lceil 1/(10^3\alpha_{j+1})\rceil }{e\ell_{j+1}} \Big)^{2\lceil 1 /(10^3\alpha_{j+1})\rceil } \ll X^{\varepsilon}.
\end{split}
\end{align}

  We multiply the Dirichlet series in \eqref{Eprodexpression} and \eqref{Mjuexp} together, keeping in mind that $n_l, n_u$ are mutually co-prime. This way, we may write $S_u$ as
\begin{align}
\label{Suexpression}
 S_u= \sum_{n  \leq X^{1/10}} v_n  \sumstar_{\substack{(d,2)=1  }} A(d)^{-a}\chi^{(8d)}(n) \Phi\Big( \frac{d}{X} \Big),
\end{align}
  where $|v_{n}| \ll  X^{\varepsilon}$ by \eqref{xnbounds} and \eqref{ynbound}. \newline

Lemma \ref{prsum} can be used to evaluate the inner-most sum in \eqref{Suexpression}.  The contribution from the $O$-term in \eqref{sumAd} is
\begin{align*}
 \ll X^{1/2+\varepsilon} \sum_{n  \leq X^{1/10}} X^{\varepsilon} n^{1/4+\varepsilon} \ll X,
\end{align*}
  which is negligible. \newline

 It then suffices to focus on what springs from the main term in \eqref{sumAd}.  This is
\begin{align*}
\begin{split}
  \ll X \prod^j_{l=1} & \Big (\sum_{\substack{n_l=\square}} \frac{h(n_l,\sigma, X^{\alpha_j})s(n_l,X^{\alpha_j})}{\sqrt{n_l}} \frac{a^{\Omega(n_l)}}{w(n_l)}  b_l(n_l) \prod_{p|n_l}(1+A(p)^{-a}p^{-1})^{-1}\Big )  \Big ( \Big ( \frac {a \cdot 10^3}{\ell_{j+1}} \Big)^{2\lceil 1/(10^3\alpha_{j+1})\rceil }  \\
& \times \sum_{ \substack{ n_u=\square \\ \Omega(n_u) = 2\lceil 1/(10^3\alpha_{j+1})\rceil \\ p|n_u \implies
p \in P_{j+1}}}
\frac{(2\lceil 1/(10^3\alpha_{j+1})\rceil  )!s(n_u, X^{\alpha_u})}{\sqrt{n_u}}\frac{h(n_u,\sigma, X^{\alpha_u})
  }{w(n_u)}\prod_{p|n_u}(1+A(p)^{-a}p^{-1})^{-1} \Big ).
\end{split}
\end{align*}

   Note that $\prod_{p|n_l}(1+A(p)^{-k}p^{-1})^{-1} \leq 1$ and $h(n, \sigma, X^{\alpha_j}), h(n, \sigma, X^{\alpha_u}) \geq 0$ when $n=\square$. It follows that the expression above is
\begin{align}
\label{Sumainterm}
\begin{split}
  \ll X \prod^j_{l=1} & \Big (\sum_{\substack{n_l=\square}} \frac{h(n_l,\sigma, X^{\alpha_j})s(n_l,X^{\alpha_j})}{\sqrt{n_l}} \frac{a^{\Omega(n_l)}}{w(n_l)}  b_l(n_l) \Big ) \Big ( \Big ( \frac {a \cdot 10^3}{\ell_{j+1}} \Big)^{2\lceil 1 /(10^3\alpha_{j+1})\rceil } \\
& \times \sum_{ \substack{ n_u=\square \\ \Omega(n_u) = 2\lceil 1/(10^3\alpha_{j+1})\rceil \\ p|n_u \implies
p \in P_{j+1}}}
\frac{(2\lceil 1/(10^3\alpha_{j+1})\rceil  )!s(n_u, X^{\alpha_u})}{\sqrt{n_u}}\frac{h(n_u,\sigma, X^{\alpha_u})
  }{w(n_u)}\Big ).
\end{split}
\end{align}

  We evaluate
\begin{align}
\label{sumsqurei}
   \sum_{\substack{n_l=\square}} \frac{h(n_l,\sigma, X^{\alpha_j})s(n_l,X^{\alpha_j})}{\sqrt{n_l}} \frac{a^{\Omega(n_l)}}{w(n_l)}  b_l(n_l)
\end{align}
  by noting that the factor $b_l(n_l)$ limits $n_l$ to have all prime factors in $P_l$ such that $\Omega(n_l) \leq \ell_l$. If we remove the restriction on $\Omega(n_l)$, then \eqref{sumsqurei} becomes
\begin{align*}
 \sum_{\substack{n_l=\square \\ p | n \Rightarrow p \in P_l}} \frac{h(n_l,\sigma, X^{\alpha_j})s(n_l,X^{\alpha_j})}{\sqrt{n_l}} \frac{a^{\Omega(n_l)}}{w(n_l)}.
\end{align*}
     On the other hand, using Rankin's trick by noticing that $2^{n_l-\ell_l}\ge 1$ if $\Omega(n_l) > \ell_l$, we see that the error incurred in this relaxation does not exceed
\begin{align}
\label{sumsqureierror1}
 \sum_{\substack{n_l=\square \\ p | n \Rightarrow p \in P_l}} \frac{2^{\Omega(n_l)-\ell_l} |h(n_l,\sigma, X^{\alpha_j})s(n_l,X^{\alpha_j})|}{\sqrt{n_l}} \frac{a^{\Omega(n_l)}}{w(n_l)}.
\end{align}

    Note that both the main term and the error term  above are now multiplicative functions of $n$ and hence can be evaluated in terms of products over primes.
    From Lemma \ref{RS}, the bound $|s(n_l, X^{\alpha_j})| \leq 1$ for all $n$ involved and \eqref{hbound}, emerges the following formula,
\begin{align*}
\begin{split}
 \sum_{\substack{n_l=\square \\ p | n \Rightarrow p \in P_l}} \frac{h(n_l,\sigma, X^{\alpha_j})s(n_l,X^{\alpha_j})}{\sqrt{n_l}} \frac{a^{\Omega(n_l)}}{w(n_l)}
 =& \prod_{p \in  P_l }\Big  (1+\frac {a^2h^2(p,\sigma, X^{\alpha_j})s^2(p,X^{\alpha_j})}{2p}+O\Big( \frac 1{p^{2}} \Big) \Big ) \\
 =& \prod_{p \in  P_l }\Big  (1+\frac {a^2h^2(p,\sigma, X^{\alpha_j})}{2p}+O\Big( \frac {\log p}{p\log X^{\alpha_j}}+\frac 1{p^{2}}  \Big) \Big ).
\end{split}
\end{align*}
Here the last expression above follows from \eqref{hbound} and the observation that for $p \leq X^{\alpha_j}$,
\begin{align*}
  s(p, X^{\alpha_j})=1+O\Big( \frac {\log p}{\log X^{\alpha_j}} \Big ).
\end{align*}

Apply the well-known inequality $1+x \leq e^x$ for any $x\in \rear$, we get
\begin{align*}
\begin{split}
  \prod_{p \in  P_l }\Big  (1+\frac {a^2h^2(p,\sigma, X^{\alpha_j})}{p}+O\Big( \frac {\log p}{p\log X^{\alpha_j}}+\frac 1{p^{2}}  \Big) \Big )
\leq \exp \Big ( \sum_{p \in  P_l } \frac {a^2h^2(p,\sigma, X^{\alpha_j})}{2p}+O\Big( \sum_{p \in  P_l } \Big (\frac {\log p}{p\log X^{\alpha_j}}+\frac 1{p^{2}}  \Big) \Big)\Big ).
\end{split}
\end{align*}

Similarly, via \eqref{sumpj}, \eqref{sumsqureierror1} is
\begin{align*}
 \leq 2^{-\ell_l/2}\exp \Big ( \sum_{p \in  P_l } \frac {a^2h^2(p,\sigma, X^{\alpha_j})}{2p}+O\Big( \sum_{p \in  P_l } \Big (\frac {\log p}{p\log X^{\alpha_j}}+\frac 1{p^{2}}  \Big) \Big)\Big ).
\end{align*}

   It follows that
\begin{align}
\label{sumsnlest}
 \sum_{\substack{n_l=\square}} \frac{h(n_l,\sigma, X^{\alpha_j})s(n_l,X^{\alpha_j})}{\sqrt{n_l}} \frac{a^{\Omega(n_l)}}{w(n_l)}  b_l(n_l) \leq
  \Big (1+O(2^{-\ell_l/2})\Big )\exp \Big ( \sum_{p \in  P_l } \frac {a^2h^2(p,\sigma, X^{\alpha_j})}{2p}+O\Big( \sum_{p \in  P_l } \Big (\frac {\log p}{p\log X^{\alpha_j}}+\frac 1{p^{2}}  \Big) \Big)\Big ).
\end{align}

  Next, using arguments that lead to \eqref{S0bound2} (the computation here is, in fact, simpler as the inner-most sum over $d$ on the right-hand side of \eqref{M1fbound} is replaced by $1$), upon taking $B$ large enough,
\begin{align}
\label{Su1stestmain}
\begin{split}
   \Big ( \frac {a \cdot 10^3}{\ell_{j+1}} \Big)^{2\lceil 1/(10^3\alpha_{j+1})\rceil } \sum_{ \substack{ n_u=\square \\ \Omega(n_u) = 2\lceil 1/(10^3\alpha_{j+1})\rceil \\ p|n_u \implies
p \in P_{j+1}}}
\frac{(2\lceil 1/(10^3\alpha_{j+1})\rceil  )!s(n_u, X^{\alpha_u})}{\sqrt{n_u}}\frac{h(n_u,\sigma, X^{\alpha_u})
  }{w(n_u)} \ll & e^{-10^3(Q+1)a/(2\alpha_{j+1})}.
\end{split}
\end{align}

 We conclude from \eqref{Suexpression}, \eqref{Sumainterm}, \eqref{sumsnlest} and \eqref{Su1stestmain} that
\begin{align}
\label{prodEestmain}
\begin{split}
  S_u
\leq &  e^{-10^3(Q+1)a/(2\alpha_{j+1})}X\prod^j_{l=1}\Big (1+O(2^{-\ell_l/2})\Big ) \times \exp \Big (\sum_{p \in  \bigcup^j_{l=1}P_l }\frac {a^2h^2(p,\sigma, X^{\alpha_j})}{2p}
 +O\Big(\sum_{p \in  \bigcup^j_{l=1}P_l }\Big (\frac {\log p}{p\log X^{\alpha_j}}+\frac 1{p^{2}}\Big ) \Big )\Big ) \\
\ll & e^{-10^3(Q+1)a/(2\alpha_{j+1})}X \exp \Big (\sum_{p \in  \bigcup^j_{l=1}P_l }\frac {a^2h^2(p,\sigma, X^{\alpha_j})}{2p}
 +O\Big(\sum_{p \in  \bigcup^j_{l=1}P_l }\Big (\frac {\log p}{p\log X^{\alpha_j}}+\frac 1{p^{2}}\Big ) \Big )\Big ).
\end{split}
\end{align}

    Note that we have
\begin{align}
\label{sumpsqure}
  \sum_{p \in  \bigcup^{j}_{l=1}P_l }\frac 1{p^{2}} \ll 1.
\end{align}
  Moreover, by Lemma \ref{RS},
\begin{align}
\label{sumsplogp}
  \sum_{p \in  \bigcup^j_{l=1}P_l }\frac {\log p}{p\log X^{\alpha_j}}=& \sum_{p \leq  X^{\alpha_j} }\frac {\log p}{p\log X^{\alpha_j}} \ll 1.
\end{align}

Thus from \eqref{prodEestmain}--\eqref{sumsplogp},
\begin{align*}
  S_u \ll & e^{-10^3(Q+1)a/(2\alpha_{j+1})}X \exp \Big (\sum_{p \in  \bigcup^j_{l=1}P_l }\frac {a^2h^2(p,\sigma, X^{\alpha_j})}{2p} \Big ).
\end{align*}

Inserting the above into \eqref{upperboundprodE0} and utilizing the observation that $20/\alpha_{j+1}=1/\alpha_j$, we uncover that
\begin{align}
\label{produpperboundoverSj}
\begin{split}
& \sumstar_{\substack{(d,2)=1 \\d \in S(j) }} \big| L\big(\sigma+it_1,\chi^{(8d)} \big) \big|^{a_1} \cdots \big| L\big(\sigma+it_{k},\chi^{(8d)}  \big) \big|^{a_{k}}  \Phi \Big( \frac {d}{X} \Big) \\
\ll & (\mathcal{J}-j)\exp \left(-\frac {10(Q+1)a}{\alpha_j} \right)X\exp \Big (\sum_{p \in  \bigcup^j_{l=1}P_l }\frac {a^2h^2(p,\sigma, X^{\alpha_j})}{2p}+\sum_{p\leq X^{\alpha_j/2}}
\frac{a^2h_1(p,\sigma, X^{\alpha_j})}{4p} \Big )\\
\ll & \exp \left(-\frac {(Q+1)a}{\alpha_j} \right)X
\exp \Big (\sum_{p \in  \bigcup^j_{l=1}P_l }\frac {a^2h^2(p,\sigma, X^{\alpha_j})}{2p}+\sum_{p\leq X^{\alpha_j/2}}
\frac{a^2h_1(p,\sigma, X^{\alpha_j})}{4p} \Big ),
\end{split}
\end{align}
 where the last estimation above follows from \eqref{sumpj}. \newline

   We now specify $\sigma=1/2$ in \eqref{produpperboundoverSj} and the observation that $|h(p)| \ll 1$ for any prime $p$ to see that
\begin{align}
\label{sumpest}
\begin{split}
 \sum_{p \in  \bigcup^j_{l=1}P_l }\frac {a^2h^2(p,\sigma, X^{\alpha_j})}{2p}=& \sum_{p \leq X^{\alpha_j}}\frac {(2h(p))^2)}{2p^{1+1/\log X^{\alpha_j}}} = \sum_{p \leq X^{\alpha_j}}\frac {(2h(p))^2)}{2p}
+O\Big(\sum_{p \leq X^{\alpha_j}}\Big|(2h(p))^2\Big |\Big |\frac {1}{p}-\frac {1}{p^{1+1/\log X^{\alpha_j}}}\Big|\Big) \\
=& \sum_{p \leq X^{\alpha_j}}\frac {(2h(p))^2)}{2p}
+O\Big(\sum_{p \leq X^{\alpha_j}}\frac {\log p}{p\log X^{\alpha_j}}\Big) = \sum_{p \leq X^{\alpha_j}}\frac {(2h(p))^2)}{2p} +O(1),
\end{split}
\end{align}
  where the last estimation above follows from \eqref{sumsplogp}. \newline

  Similarly, we have
\begin{align}
\label{sump2est}
\begin{split}
 \sum_{p\leq X^{\alpha_j/2}}
\frac{a^2h_1(p,\sigma, X^{\alpha_j})}{4p}=& \sum_{p\leq X^{\alpha_j/2}} \frac{h(p^2)}{p}
+O(1). 
\end{split}
\end{align}

Using \eqref{sumpest} and \eqref{sump2est} in \eqref{produpperboundoverSj} lead to
\begin{align}
\label{produpperboundoverSjsimplified}
\sumstar_{\substack{(d,2)=1 \\d \in S(j) }} \big| L\big(\sigma+it_1,\chi^{(8d)} \big) \big|^{a_1} \cdots \big| L\big(\sigma+it_{k},\chi^{(8d)}  \big) \big|^{a_{k}} \Phi \Big( \frac {d}{X} \Big) \ll X \exp \Big (-\frac {(Q+1)a}{\alpha_j} + \sum_{p \leq X}\frac {(2h(p))^2)}{2p}+\sum_{p\leq X} \frac{h(p^2)}{p} \Big ).
\end{align}

   Note that
\begin{align}
\label{hexp}
\begin{split}
& \sum_{p \leq X }\frac {(2h(p))^2}{2p}+\sum_{p\leq X} \frac{h(p^2)}{p} = \sum_{p \leq X }\frac {1}{2p}\Big ( \Big( \sum^{k}_{j=1}a_j\cos(t_j\log p) \Big)^2+\sum^{k}_{j=1}a_j\cos(2t_j\log p)\Big ) \\
=& \sum_{p \leq X }\frac {1}{2p}\Big (\sum^{k}_{j=1}a^2_j\cos^2(t_j\log p)+2\sum_{1 \leq i<j \leq k}a_ia_j\cos(t_i\log p)\cos(t_j\log p)+\sum^{k}_{j=1}a_j\cos(2t_j\log p)\Big ) \\
=& \sum_{p \leq X }\frac {1}{2p}\Big (\frac 12\sum^{k}_{j=1}a^2_j+\sum_{1 \leq i<j \leq k}a_ia_j\big(\cos((t_i+t_j)\log p)+\cos((t_i-t_j)\log p)\big )+\sum^{k}_{j=1} \Big( \frac {a^2_j}{2}+a_j \Big) \cos(2t_j\log p)\Big ),
\end{split}
\end{align}
upon using the relation
\begin{align*}
\begin{split}
& \cos (\alpha)\cos (\beta)=\frac 12 (\cos (\alpha+\beta)+\cos(\alpha-\beta)).
\end{split}
\end{align*}

  We now apply \eqref{mertenstype} to evaluate the last expression in \eqref{hexp} and then substitute the result into \eqref{produpperboundoverSjsimplified}.  Summing over $j$, which then leads to a convergent series. This allows us to derive the desired estimation in \eqref{sumovermj} and hence completes the proof of Theorem \ref{t1}.

\section{Proof of Theorem \ref{quadraticmean}}
\label{sec: mainthm}

\subsection{Initial treatments}

As explained in the paragraph below Theorem \ref{quadraticmean}, it suffices to establish \eqref{mainestimation}. To that end, we note first that by \cite[Theorem 1]{Armon} that we have for any $m>0$,
\begin{align*}
\begin{split}
S_m(X,Y) \ll & X^{m+1}.
\end{split}
\end{align*}

Thus, we may assume that $Y \leq X$.  Let $\Phi_U(t)$ be a non-negative smooth function supported on $(0,1)$, satisfying $\Phi_U(t)=1$ for $t \in (1/U, 1-1/U)$ with $U$ a parameter to be optimized later and $\Phi^{(j)}_U(t) \ll_j U^j$ for all integers $j \geq 0$. We denote the Mellin transform of $\Phi_U$ by $\widehat{\Phi}_U$ and observe that repeated integration by parts (keeping in mind that the function $\Phi^{(j)}_U(t)$ is supported on intervals with total lengths being bounded by $O(1/U)$ for each $j \geq 1$) gives that, for any integer $E \geq 1$ and $\Re(s) \geq 1/2$,
\begin{align}
\label{whatbound}
 \widehat{\Phi}_U(s)  \ll  U^{E-1}(1+|s|)^{-E}.
\end{align}

    Note that H\"older’s inequality implies $|x + y|^{2m} \leq 2^{2m-1}(|x|^{2m} + |y|^{2m})$ for any $x, y \in \mc$. We insert the function $\Phi_U(\frac nY)$ into the definition of $S_m(X,Y)$ and obtain that
\begin{align}
\label{charintinitial}
\begin{split}
S_m(X,Y) \ll &
  \sumstar_{\substack{d \leq X \\ (d,2)=1}}\Big | \sum_{n}\chi^{(8d)}(n)\Phi_U \Big( \frac {n}{Y} \Big)\Big |^{2m}
  +\sumstar_{\substack{d \leq X \\ (d,2)=1}}\bigg|\sum_{n\leq Y} \chi^{(8d)}(n)\Big(1-\Phi_U \Big( \frac {n}{Y} \Big) \Big)\bigg|^{2m}.
\end{split}
\end{align} 

The Mellin inversion gives that
\begin{align}
\label{charintinitial1}
\begin{split}
 \sumstar_{\substack{d \leq X \\ (d,2)=1}}\Big | \sum_{n}\chi^{(8d)}(n)\Phi_U \Big( \frac {n}{Y} \Big)\Big |^{2m}=&
\sumstar_{\substack{d \leq X \\ (d,2)=1}}\Big | \int\limits_{(2)}L(s, \chi^{(8d)})Y^s\widehat{\Phi}_U(s) \dif s\Big |^{2m}.
\end{split}
\end{align} 

Observe that by \cite[Corollary 5.20]{iwakow}, that under GRH, for every primitive Dirichlet character $\chi$ modulo $q$, $\Re(s) \geq 1/2$ and any $\varepsilon>0$,
\begin{align}
\label{Lbound}
 L(s, \chi) \ll |qs|^{\varepsilon}.
\end{align}

   The bounds in  \eqref{whatbound} and \eqref{Lbound} allow us to shift the line of integration in \eqref{charintinitial1} to $\Re(s)=1/2$ to obtain that
\begin{align}
\label{charint}
\begin{split}
  \sumstar_{\substack{d \leq X \\ (d,2)=1}}\Big | \sum_{n}\chi^{(8d)}(n)\Phi_U \Big( \frac {n}{Y} \Big)\Big |^{2m} = &
 \sumstar_{\substack{d \leq X \\ (d,2)=1}}\Big | \int\limits_{(1/2)}L(s, \chi^{(8d)})Y^s\widehat{\Phi}_U(s) \dif s\Big |^{2m}.
\end{split}
\end{align} 	

  We split the range of integral in \eqref{charint} into two parts, $|\Im(s)| \leq X^Q$ and $|\Im(s)| > X^Q$, for some $Q>0$ to be specified later, obtaining that \eqref{charint} is
\begin{align}
\label{charintinitial0}
\ll \sumstar_{\substack{d \leq X \\ (d,2)=1}}\Big | \int\limits_{\substack{ (1/2) \\ |\Im(s)| \leq X^Q}}L(s, \chi^{(8d)})Y^s\widehat{\Phi}_U(s) \dif s\Big |^{2m}+
\sumstar_{\substack{d \leq X \\ (d,2)=1}}\Big | \int\limits_{\substack{ (1/2) \\ |\Im(s)| > X^Q}}L(s, \chi^{(8d)})Y^s\widehat{\Phi}_U(s) \dif s\Big |^{2m}.
\end{align} 	

  We next apply \eqref{whatbound} and H\"older's inequality to deduce that, as $m \geq 1/2$, 
\begin{align}
\label{LintImslarge}
  \sumstar_{\substack{d \leq X \\ (d,2)=1}} & \Big | \int\limits_{\substack{ (1/2) \\ |\Im(s)| > X^Q}}L(s, \chi^{(8d)})Y^s\widehat{\Phi}_U(s) \dif s\Big |^{2m}
 \ll Y^m\Big (\int\limits_{\substack{ (1/2) \\ |\Im(s)| > X^Q}}\Big | \widehat{\Phi}_U(s) \Big| |\dif s| \Big )^{2m-1}
 \int\limits_{\substack{ (1/2) \\ |\Im(s)| > X^Q}}\sumstar_{\substack{d \leq X \\ (d,2)=1}}\Big |L(s, \chi^{(8d)})\Big|^{2m} \Big| \widehat{\Phi}_U(s)\Big | |\dif s|.
\end{align} 	

By \eqref{whatbound},
\begin{align}
\label{phiint}
\begin{split}
  \int\limits_{\substack {(1/2) \\ |\Im(s)| > X^Q}}\Big | \widehat{\Phi}_U(s) \Big| |\dif s|
 \ll & \int\limits_{\substack {(1/2) \\ |\Im(s)| > X^Q}}\frac U{1+|s|^2} |\dif s| \ll_Q \frac {U}{X^Q}.
\end{split}
\end{align}

Using \eqref{Lbound},
\begin{align}
\label{Lintslarge}
\begin{split}
  & \int\limits_{\substack {(1/2) \\ |\Im(s)| > X^Q}}\sumstar_{\substack{d \leq X \\ (d,2)=1}}
  \Big | L(s, \chi^{(8d)})\Big |^{2m} \cdot \Big |\widehat{\Phi}_U(s)\Big | |\dif s| \ll  \int\limits_{\substack {(1/2) \\ |\Im(s)| > X^Q}}\sumstar_{\substack{d \leq X \\ (d,2)=1}}|ds|^{\varepsilon}\cdot\frac U{1+|s|^2}  |\dif s| \ll XUX^{-Q(1-\varepsilon)+\varepsilon}.
\end{split}
\end{align}

   We now set $U=X^{2\varepsilon}$ and $Q=4\varepsilon$.  From \eqref{charintinitial}, \eqref{charint}--\eqref{Lintslarge},
\begin{align}
\label{Ssimplified}
\begin{split}
S_m(X,Y) \ll &
   \sumstar_{\substack{d \leq X \\ (d,2)=1}}\Big | \int\limits_{\substack{ (1/2) \\ |\Im(s)| \leq X^{\varepsilon}}}L(s, \chi^{(8d)})Y^s\widehat{\Phi}_U(s) \dif s\Big |^{2m}+\sumstar_{\substack{d \leq X \\ (d,2)=1}}\bigg|\sum_{n\leq Y} \chi^{(8d)}(n)\Big (1-\Phi_U\Big( \frac {n}{Y} \Big)\Big )\bigg|^{2m}
   +O(XY^m) \\
   \ll & Y^m \sumstar_{\substack{d \leq X \\ (d,2)=1}}
   \Big | \int\limits_{\substack{ (1/2) \\ |t| \leq X^{\varepsilon}}}\Big |L( \tfrac{1}{2}+it, \chi^{(8d)})\Big |\frac 1{1+|t|} \dif t\Big |^{2m}+\sumstar_{\substack{d \leq X \\ (d,2)=1}}\bigg|\sum_{n\leq Y} \chi^{(8d)}(n)\Big (1-\Phi_U\Big( \frac {n}{Y} \Big)\Big )\bigg|^{2m}
   +O(XY^m).
\end{split}
\end{align}

Theorem \ref{quadraticmean} follows from the following Lemma.
\begin{lemma}
\label{Lsmooth}
With the notation as above and assume the truth of GRH. We have for any integer $k \geq 1$ and any real numbers $2m \geq k+1, \varepsilon>0$, 
\begin{equation}
\label{Lsmoothest}
\sumstar_{\substack{d \leq X \\ (d,2)=1}}
   \Big | \int\limits_{\substack{ (1/2) \\ |t| \leq X^{\varepsilon}}}\Big |L(1/2+it, \chi^{(8d)})\Big |\frac 1{1+|t|}dt\Big |^{2m} \ll X(\log X)^{ E(m,k,\varepsilon)},
\end{equation}
where $E(m,k,\varepsilon)$ is defined in \eqref{Edef}.
\end{lemma}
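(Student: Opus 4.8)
The plan is to reduce the integral on the left-hand side to a sum of shifted moments of $L$-functions that can be handled by Corollary \ref{cor1}, and then to optimize the resulting exponent. First I would expand the $2m$-th power of the integral using H\"older's inequality. Writing $d\nu(t) = \frac{dt}{1+|t|}$ restricted to $|t|\le X^\varepsilon$, this is a measure of total mass $O(\log X)$, so H\"older gives
\begin{align*}
\Big|\int |L(\tfrac12+it,\chi^{(8d)})|\,d\nu(t)\Big|^{2m} \ll (\log X)^{2m-1}\int |L(\tfrac12+it,\chi^{(8d)})|^{2m}\,d\nu(t).
\end{align*}
Summing over $d$ and exchanging sum and integral, I am left with bounding $\int_{|t|\le X^\varepsilon}\big(\sumstar_{d\le X,(d,2)=1}|L(\tfrac12+it,\chi^{(8d)})|^{2m}\big)\,d\nu(t)$, i.e. the (unshifted) $2m$-th moment at a single point $\tfrac12+it$, uniformly over $|t|\le X^\varepsilon$. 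Applying Corollary \ref{cor1} with $k=1$ there, $a_1 = 2m$ and shift $t_1=t$ yields a bound $\ll X(\log X)^{m^2}g(2|t|)^{m^2+m}$, which after integrating $g(2|t|)^{m^2+m}$ against $d\nu(t)$ gives $X(\log X)^{m^2+m+\text{(something)}}$; however this crude route loses too much and will not reach $E(m,k,\varepsilon)$, so the actual argument must be more careful.

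The better approach, and the one that explains the appearance of the integer parameter $k$ and the three-term maximum in \eqref{Edef}, is to split the integral according to the size of the shift and to use Corollary \ref{cor1} with a $k$-tuple of shifts chosen to exploit near-diagonal cancellation. Concretely, after a dyadic decomposition of the $t$-range into $\{|t|\asymp T\}$ for $T$ ranging over powers of $2$ up to $X^\varepsilon$ (plus the central piece $|t|\le 1/\log X$), one applies H\"older's inequality in the form $|\int f\,d\nu|^{2m}\le \big(\int d\nu\big)^{2m-1}\int |f|^{2m}d\nu$ but now over each dyadic block separately, where $\int_{|t|\asymp T}d\nu \ll 1$. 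On each block one needs the $2m$-th moment $\sumstar_d|L(\tfrac12+it,\chi^{(8d)})|^{2m}$ and applies Corollary \ref{cor1} with the single shift $t$, picking up $g(2|t|)^{m^2+m}\asymp g(2T)^{m^2+m}$, and then sums the geometric-type contributions in $T$. The key point is that to get the sharp exponent one instead applies Corollary \ref{cor1} with $k$ \emph{distinct} shifts spaced inside the block — writing $|L(\tfrac12+it,\chi^{(8d)})|^{2m}$ as a product of $k$ copies of $|L|^{2m/k}$ at nearby points and using that for $|t_j - t_l|$ small the factors $g(|t_j-t_l|)\asymp \log X$ appear, one optimizes the number of such factors $k$ against the exponents $(a_i^2)/4$, $a_i^2/4+a_i/2$ to produce exactly the three candidate exponents $2m^2-m+1$, $(2m-k)^2/4+2m+1+\varepsilon$, and $2m^2-2mk+3k^2/4+m-3k/4+\varepsilon$ appearing in \eqref{Edef}, whose maximum is $E(m,k,\varepsilon)$.

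In detail, after the dyadic split, for the block $|t|\asymp T$ with $T$ in the range $1/\log X \le T \le 10$ (where $g(2T)\asymp 1/T$), and for the far range $10 \le T \le X^\varepsilon$ (where $g(2T)\asymp \log\log T \ll \log\log X$), one sums $\sum_T T^{-1} \cdot (\text{moment bound on block } T)$; the logarithmic measure $d\nu$ contributes the factor that must be balanced. The condition $2m \ge k+1$ is exactly what is needed so that, when splitting $2m = a_1 + \cdots + a_k$ with near-equal parts, each $a_i \ge 1$ and Lemma \ref{prop: upperbound} / Corollary \ref{cor1} applies with a usable exponent; this is where $k+1 \le 2m$ enters. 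The $\varepsilon$'s in two of the three terms of $E(m,k,\varepsilon)$ come from the $X^\varepsilon$ cutoff on $|t|$ (the $\log\log X$ in $g$ and the number of dyadic blocks $\ll \log X$ contribute $(\log X)^\varepsilon$-type losses).

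\textbf{Main obstacle.} The hard part will be the careful bookkeeping in the dyadic/shift optimization: one must choose, for each dyadic block at height $T$, the right number of shift points and the right splitting of the exponent $2m$ among them so that the diagonal-type factors $|\zeta(1 + i(t_j - t_l) + 1/\log X)|$ produce the maximal saving of $(\log X)$-powers \emph{without} over-counting, and then verify that summing the per-block bounds (a geometric series in the near range, $\ll \log X$ blocks in the far range) does not inflate the exponent beyond $E(m,k,\varepsilon)$. Getting the three-way maximum in \eqref{Edef} to come out exactly — rather than something weaker — requires matching the exponent arithmetic in Corollary \ref{cor1} term by term, and in particular handling separately the central contribution $|t|\le 1/\log X$ (where every $g$ is $\asymp \log X$, giving the clean term $2m^2 - m + 1$) and the genuinely off-diagonal contributions.
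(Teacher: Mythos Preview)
Your overall instinct---dyadic split plus Corollary \ref{cor1} with several shifts---is right, and the first dyadic split into blocks $[e^{n-1}-1,e^n-1]$ is exactly what the paper does. But the key mechanism you propose for producing multiple shifts is not viable, and this is a genuine gap. You write that on each block one should view ``$|L(\tfrac12+it,\chi^{(8d)})|^{2m}$ as a product of $k$ copies of $|L|^{2m/k}$ at nearby points''. There is no such inequality: $|L(\tfrac12+it)|$ and $|L(\tfrac12+it')|$ for $t\neq t'$ are not pointwise comparable, so you cannot trade the value at $t$ for values at several neighboring points. The different shifts in Corollary \ref{cor1} have to come from somewhere else.

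The paper obtains them by opening the power of the \emph{integral}, not of the integrand. After the first dyadic reduction one is left with bounding $\sumstar_d\big(\int_0^E |L(\tfrac12+it,\chi^{(8d)})|\,dt\big)^{2m}$ for $E\asymp e^n$ (this is Proposition \ref{t3prop}). One writes
\[
\bigg(\int_0^E |L|\,dt\bigg)^{2m}\ \ll\ \int_{[0,E]^k}\prod_{a=1}^k|L(\tfrac12+it_a)|\bigg(\int_{\mathcal D}|L(\tfrac12+iu)|\,du\bigg)^{2m-k}\,d\mathbf t,
\]
where $\mathcal D=\mathcal D(t_1,\dots,t_k)$ is the region $\{u:|t_1-u|\le\cdots\le|t_k-u|\}$; this is legitimate by symmetry. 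The condition $2m\ge k+1$ is needed \emph{here}, so that H\"older on the inner integral gives $(\int_{\mathcal D}|L|\,du)^{2m-k}\le |\mathcal B|^{2m-k-1}\int |L|^{2m-k}du$ with a nonnegative exponent. One then invokes Corollary \ref{cor1} with the $(k{+}1)$-tuple of shifts $(t_1,\dots,t_k,u)$ and exponents $a_1=\cdots=a_k=1$, $a_{k+1}=2m-k$ (not $2m/k$ each). A second dyadic decomposition---this time in the \emph{differences} $|t_1-u|$ and $|t_{i+1}-u|-|t_i-u|$---controls the factors $g(|t_j\pm u|)$, $g(|t_j\pm t_l|)$. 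The three exponents in $E(m,k,\varepsilon)$ arise from the three regimes in that inner decomposition (small $|t_1-u|$; large $|t_1-u|$ with $|u|\ge5$; large $|t_1-u|$ with $|u|\le5$), not from the split between $|t|\le 1/\log X$ and $|t|$ large that you describe. Once you make this change, the bookkeeping you anticipate is exactly the content of the proof of Proposition \ref{t3prop}.
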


\begin{lemma}
\label{fdiff}
With the notation as above and assume the truth of GRH. We have for $m \geq 1$, 
\begin{equation}
\label{theorem3firstrest}
\sumstar_{\substack{d \leq X \\ (d,2)=1}}\bigg|\sum_{n\leq Y} \chi^{(8d)}(n)\Big (1-\Phi_U \Big( \frac {n}{Y} \Big)\Big )\bigg|^{2m} \ll XY^m.
\end{equation}
\end{lemma}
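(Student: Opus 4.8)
The plan is to exploit the fact that, since $\Phi_U\equiv 1$ on $(1/U,1-1/U)$, the coefficient $1-\Phi_U(n/Y)$ is supported, for $1\le n\le Y$, only on the two short ranges $n\le Y/U$ and $Y(1-1/U)<n\le Y$. Writing
\[
\sum_{n\le Y}\chi^{(8d)}(n)\bigl(1-\Phi_U(n/Y)\bigr)=T_1(d)+T_2(d),\qquad T_1(d)=\sum_{n\le Y/U}\chi^{(8d)}(n)\bigl(1-\Phi_U(n/Y)\bigr),
\]
and $T_2(d)=\sum_{Y(1-1/U)<n\le Y}\chi^{(8d)}(n)(1-\Phi_U(n/Y))$, it suffices, via $|a+b|^{2m}\ll|a|^{2m}+|b|^{2m}$, to prove $\sumstar_{d\le X,(d,2)=1}|T_i(d)|^{2m}\ll XY^m$ for $i=1,2$.

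For the \emph{lower tail} $T_1$: after rescaling, $(1-\Phi_U(n/Y))\mathbf{1}_{n\le Y/U}=W_1(nU/Y)$ for a fixed smooth $W_1$ supported in $[0,1]$ whose derivatives are bounded \emph{uniformly in $U$} (the factors $U^j$ produced by $\Phi_U^{(j)}$ being exactly cancelled by the rescaling). Thus $T_1(d)$ is a smoothed character sum of length $Y/U$; writing it by Mellin inversion, moving the contour to $\Re s=1/2$ under GRH and using the rapid decay $\widehat{W_1}(s)\ll(1+|s|)^{-2}$, one gets $T_1(d)\ll (Y/U)^{1/2}\int_{\mathbb R}|L(1/2+it,\chi^{(8d)})|(1+|t|)^{-2}\,dt$. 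Hölder's inequality against the finite‑mass measure $(1+|t|)^{-2}\,dt$ together with Lemma~\ref{prop: upperbound} (with $k=1$, $a_1=2m$) then yields $\sumstar_{d\le X}|T_1(d)|^{2m}\ll (Y/U)^{m}X(\log X)^{m(2m+1)}=XY^mX^{-2m\varepsilon}(\log X)^{m(2m+1)}\ll XY^m$, the saving $X^{-2m\varepsilon}$ from the shortened length $Y/U=YX^{-2\varepsilon}$ absorbing the logarithmic factor. (Equivalently, one may split $W_1$ into its part near $0$ and its part bounded away from $0$ and apply the smoothed moment bound \eqref{Smoothdef} with $Y$ replaced by $Y/U$ to the latter.)

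The main obstacle is the \emph{endpoint tail} $T_2$, a quadratic character sum over the short interval $(Y(1-1/U),Y]$ of length $Y/U$ abutting the sharp cut‑off at $Y$; it cannot be realised as a smooth sum, and bounding its $2m$-th moment is essentially Jutila's problem on a slightly shorter interval. I would treat it by a truncated Perron step: for $T_0$ a small fixed power of $Y$ one has $T_2(d)=\frac1{2\pi i}\int_{(1/2),\,|t|\le T_0}L(s,\chi^{(8d)})K(s)Y^s\,ds+O\!\bigl(Y^{1/2}\bigr)$, where $K(s)=\int_0^1(1-\Phi_U(u))u^{s-1}\,du$ satisfies $|K(1/2+it)|\ll\min\bigl(1,(1+|t|)/U\bigr)/(1+|t|)$, the $O$-term contributing $\ll XY^m$ after summation over $d$. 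One then estimates the vertical integral by Hölder against the measure $|K(1/2+it)|\,dt$ (whose mass is $\ll\log X$) followed by the sharp moment bound of Corollary~\ref{cor1} for $\sumstar_{d\le X}|L(1/2+it,\chi^{(8d)})|^{2m}$; the delicate point is that $K$ carries appreciable mass only for $|t|\gtrsim U$, and that in that range the archimedean factor $g(|2t|)$ in Corollary~\ref{cor1} is only of size $\log\log X$, so that the short length $Y/U$ again compensates for the logarithmic losses. A more elementary alternative splits the range of $Y$: for $Y$ small, Heath–Brown's quadratic large sieve (applied to $T_2$, or to $T_2^k$ together with the multiplicative energy of the short interval) suffices once the length $Y/U$ is taken into account, while for $Y$ close to $X$ one combines the pointwise bound $|T_2(d)|\ll\sqrt X\log X$ with the second‑moment large sieve. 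In every case the crux is to arrange the estimates so that the saving $X^{-2\varepsilon}$ from the shortened length dominates every logarithmic (and $X^{\varepsilon}$-) factor, uniformly for $1\le Y\le X$ and $m\ge1$.
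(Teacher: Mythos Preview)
Your treatment of the lower tail $T_1$ is fine: after the rescaling $W_1(v)=1-\Phi_U(v/U)$ one does obtain a smooth weight with uniformly bounded derivatives and $\widehat{W_1}(s)\ll(1+|s|)^{-2}$, so Mellin inversion plus Lemma~\ref{prop: upperbound} gives the $(Y/U)^m$ saving as you claim.

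The gap is in your treatment of the endpoint tail $T_2$. Your Mellin kernel satisfies $|K(\tfrac12+it)|\ll\min(1/U,1/(1+|t|))$, and the saving $1/U$ is only present in the range $|t|\lesssim U$. In the complementary range $|t|\gtrsim U$ one has merely $|K|\ll 1/|t|$, identical to what the sharp cutoff $\mathbf 1_{n\le Y}$ would give, and there the factor in front is $|Y^s|=Y^{1/2}$, not $(Y/U)^{1/2}$. Carrying your argument through, H\"older against $|K|\,dt$ (mass $\asymp\log X$) together with Corollary~\ref{cor1} yields
\[
\sumstar_{d\le X}|T_2(d)|^{2m}\ \ll\ XY^{m}\,(\log X)^{m^2+2m}(\log\log X)^{O(1)},
\]
the dominant contribution coming from $\int_U^{T_0}(\log\log t)^{O(1)}t^{-1}\,dt\asymp\log X$. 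No factor of $U^{-1}$ survives here, so the phrase ``the short length $Y/U$ again compensates for the logarithmic losses'' is not justified: the short length is invisible on the critical line once $|t|\gg U$. Your ``more elementary alternative'' is likewise too vague to close this gap; the pointwise bound $|T_2(d)|\ll\sqrt X\log X$ combined with the second moment still leaves logarithmic losses.

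The paper's device is to avoid estimating the $2m$-th moment of $T_2$ directly. Instead one applies Cauchy--Schwarz to the whole sum $\sum_{n\le Y}\chi^{(8d)}(n)(1-\Phi_U(n/Y))$, bounding it by the square root of its \emph{second} moment times the square root of its $(4m-2)$-th moment. The second moment is handled by Heath--Brown's quadratic large sieve, and the diagonal count $\#\{n_1,n_2\in(Y(1-1/U),Y]:n_1n_2=\square\}\ll YU^{-2}\log Y$ supplies a genuine $U^{-2}$ saving. The $(4m-2)$-th moment is bounded crudely by $XY^{2m-1}(\log X)^{O(1)}$ via Perron's formula and Proposition~\ref{t3prop} (which you may freely lose, since the $U^{-1}=X^{-2\varepsilon}$ coming from the other factor absorbs it). The point is that the $U$-saving is extracted at the level of the second moment, where the large sieve sees the short support directly, rather than through a Mellin kernel.
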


The remainder of the paper will be devoted to the proofs of these Lemmas.

\section{Proof of Lemma \ref{Lsmooth}}

  Our proof follows closely the treatments in \cite{Szab}. We deduce from \eqref{Ssimplified} by symmetry and H\"older's
inequality that for $a=1-1/(2m)+\varepsilon$ with $\varepsilon>0$, 
\begin{align*}
\begin{split}
 \Big | \int\limits_{ |t| \leq X^{\varepsilon}}  & \Big |L(\tfrac{1}{2}+it, \chi^{(8d)})|\frac {\dif t}{1+|t|} \Big |^{2m} \ll \Big |\int_0^{X^{\varepsilon}} \frac{|L(\tfrac{1}{2}+it,\chi^{(8d)})|}{t+1} \dif t\Big |^{2m} \\
 & \leq \bigg(\sum_{n\leq \log X+1} n^{-2am/(2m-1)} \bigg)^{2m-1}
    \sum_{n\leq  \log X+1} \bigg(n^a\int\limits_{e^{n-1}-1}^{e^{n}-1 } \frac{|L(\tfrac{1}{2}+it,\chi^{(8d)}) |}{t+1} \dif t\bigg)^{2m}   \\
  & \ll \sum_{n\leq  \log X+1} \frac{n^{2m-1+\varepsilon} }{e^{2nm} } \bigg( \int\limits_{e^{n-1}-1}^{e^{n}-1 } |L(\tfrac{1}{2}+it,\chi^{(8d)}) | \dif t \bigg)^{2m}.
\end{split}
\end{align*}

   We need the following result, which will be proved later, to estimate the last expression above.
\begin{proposition}
\label{t3prop}
 With the notation as above and the truth of GRH, we have for any fixed integer $k \geq 1$ and any real numbers $2m \geq k+1$, $10 \leq E=X^{O(1)}$,
\begin{align}
\label{finiteintest}
\begin{split}
  \sumstar_{\substack{d \leq X \\ (d,2)=1}} & \bigg(\int\limits_{0}^{E}|L(\tfrac{1}{2}+it,\chi^{(8d)})| \dif t\bigg)^{2m} \\
 \ll & X\big( (\log X)^{2m^2-m+1}E^k(\log \log E)^{O(1)}+(\log X)^{(2m-k)^2/4+1}E^{2m}(\log \log E)^{O(1)}(\log\log X)^{O(1)} \\
 & \hspace*{2cm} +(\log X)^{2m^2-2mk+3k^2/4+m-3k/4}E^{2m-1}(\log \log E)^{O(1)}(\log\log X)^{O(1)}\big).
\end{split}
\end{align}
\end{proposition}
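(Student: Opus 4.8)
The plan is to bound the integral $\int_0^E |L(\tfrac12+it,\chi^{(8d)})|\,\dif t$ by replacing the continuous integral with a sum over unit intervals and then applying a moment estimate on each short interval. First I would discretize: write $\int_0^E = \sum_{0 \le \nu < E} \int_\nu^{\nu+1}$, and by H\"older's inequality in the $\nu$-variable (using $2m \ge k+1 > 1$), one gets
\begin{align*}
\Big(\int_0^E |L(\tfrac12+it,\chi^{(8d)})|\,\dif t\Big)^{2m} \ll E^{2m-1}\sum_{0\le \nu < E}\Big(\int_\nu^{\nu+1}|L(\tfrac12+it,\chi^{(8d)})|\,\dif t\Big)^{2m} \ll E^{2m-1}\sum_{0\le\nu<E}\int_\nu^{\nu+1}|L(\tfrac12+it,\chi^{(8d)})|^{2m}\,\dif t,
\end{align*}
the last step again by H\"older (or Jensen) on the unit interval. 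Summing over $d$ and swapping the order, the problem reduces to bounding, for each fixed $t$ with $0 \le t \le E$,
\begin{align*}
\sumstar_{\substack{d\le X\\(d,2)=1}}|L(\tfrac12+it,\chi^{(8d)})|^{2m},
\end{align*}
and then integrating the resulting bound against $\dif t$ over $[0,E]$.

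The key input is Corollary \ref{cor1}, applied with all shifts equal, i.e. $a_1=\cdots=a_k$ chosen so that the exponents match $2m$ in a way that makes the off-diagonal $\zeta$-factors as small as possible; more precisely, following \cite{Szab}, I would not take a single $L$-value to a high power but rather split $|L|^{2m}$ cleverly. The cleanest route: take $k$ points $t_1=\cdots=t_k=t$ with exponents $a_j$ summing appropriately, or alternatively apply Corollary \ref{cor1} with a mix of shifts near $t$, near $0$, and near $2t$, so that the product of $g$-factors reflects the sizes $g(0)=\log X$, $g(2t)$, $g(|t_i-t_j|)$. Concretely, the three terms in $E(m,k,\varepsilon)$ in \eqref{Edef} will arise from three regimes of $t$: (i) $t$ small (say $t \le 1/\log X$), where all $g$-factors are $\log X$, contributing the pure power $(\log X)^{2m^2-m+1}$ after integrating over a range of length $O(1)$ and accounting for the $E^k$ from the number of such "clustered" contributions; (ii) $t$ of moderate size, where $g(2t) \asymp \log\log t$ is bounded by $(\log\log E)^{O(1)}$ and one picks up $E^{2m}$ from the length of integration together with the diagonal exponent $(2m-k)^2/4+1$; (iii) an intermediate regime giving the third term $2m^2-2mk+3k^2/4+m-3k/4$ with $E^{2m-1}$. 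The bookkeeping is to choose, for each dyadic block of $t$, the optimal way to distribute the total weight $2m$ among $k$ copies of $L$ (some evaluated at shift $\approx t$, contributing $g(0)$-type factors, others arranged to exploit $g(|t_i - t_j|) \le \log X$), and then take the minimum — which is exactly the $\max$ inside $E(m,k,\varepsilon)$ once one realizes the worst case over $t$.

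The main obstacle I anticipate is the combinatorial optimization over how to apply Corollary \ref{cor1}: one must decide how many of the $k$ shifted points to place at $\approx t$, how many at $\approx 0$, and how the cross-terms $g(|t_i\pm t_j|)$ behave, then verify that the resulting exponent is bounded by $E(m,k,\varepsilon)$ uniformly in $t \le E$. In particular, handling the transition between the regime where $2t \le 1/\log X$ (so $g(2t) = \log X$) and where $2t$ is larger requires care, as does ensuring the constraint $2m \ge k+1$ is exactly what is needed for the H\"older step and for the diagonal term $(2m-k)^2/4$ to dominate appropriately. Once the pointwise-in-$t$ bound of the shape $X(\log X)^{A(t)}(\log\log X)^{O(1)}$ is established with $A(t)$ piecewise given by the three expressions, the final integration $\int_0^E \dif t$ and the factor $E^{2m-1}$ from discretization combine to yield \eqref{finiteintest}; the $(\log\log E)^{O(1)}$ factors track the $g(x) = \log\log x$ regime for $10 \le x \le e^X$ in \eqref{gDef}. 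I would also need the trivial observation that $E = X^{O(1)}$ keeps all these logarithmic factors under control and lets me absorb the error terms from Corollary \ref{cor1}'s implied constants.
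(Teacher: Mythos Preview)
Your approach has a genuine gap: the very first step, applying H\"older to get
\[
\Big(\int_0^E |L(\tfrac12+it,\chi^{(8d)})|\,\dif t\Big)^{2m}\ \ll\ E^{2m-1}\int_0^E |L(\tfrac12+it,\chi^{(8d)})|^{2m}\,\dif t,
\]
is already too lossy to recover \eqref{finiteintest}. After this step you are forced to bound $\sum^*_d |L(\tfrac12+it,\chi^{(8d)})|^{2m}$ pointwise in a \emph{single} variable $t$, and Corollary~\ref{cor1} then only sees one shift (or several copies of the same shift), so every cross-factor is $g(0)=\log X$. A direct computation shows your route yields at best
\[
X\big(E^{2m-1}(\log X)^{2m^2+m-1}+E^{2m}(\log X)^{m^2}(\log\log E)^{O(1)}\big),
\]
and for bounded $E$ (e.g.\ $E=10$) this is $(\log X)^{2m^2+m-1}$, strictly larger than the $(\log X)^{2m^2-m+1}$ required by the first term of \eqref{finiteintest} once $m>1$. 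In particular you can never reach an $E$-exponent as small as $k$; your scheme is anchored at $E^{2m-1}$ from the outset. The later talk of ``placing some shifts near $t$, some near $0$'' cannot be carried out, because after the H\"older step there is only one $t$ left.

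What the paper actually does is keep $k$ of the $2m$ factors as genuine, independent integration variables: by symmetry
\[
\Big(\int_0^E |L|\Big)^{2m}\ \ll\ \int_{[0,E]^k}\prod_{a=1}^k|L(\tfrac12+it_a,\chi^{(8d)})|\Big(\int_{\mathcal D(t_1,\dots,t_k)}|L(\tfrac12+iu,\chi^{(8d)})|\,\dif u\Big)^{2m-k}\dif\mathbf t,
\]
with $\mathcal D=\{u\in[0,E]:|t_1-u|\le\cdots\le|t_k-u|\}$, and only the inner integral over $u$ is H\"oldered up to the $(2m-k)$-th power. Now Corollary~\ref{cor1} is applied to the full $(k{+}1)$-tuple $(t_1,\dots,t_k,u)$ with exponents $(1,\dots,1,2m-k)$, so the cross-factors $g(|t_i\pm t_j|)$ and $g(|t_i\pm u|)$ give real savings. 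A dyadic decomposition of $|t_1-u|$ and of the successive gaps $|t_{i+1}-u|-|t_i-u|$ into the ranges $\mathcal B_j\sim e^{j}/\log X$ balances the volume of the region of integration against these $g$-factors. The three terms in \eqref{finiteintest} then arise not from ``$t$ small / moderate / large'' but from: (i) $u$ within distance $O(1)$ of $t_1$ (volume $\asymp E^k(\log X)^{-k}$, large $g$-factors), giving the $E^k$ term; (ii) $|t_1-u|$ large and $|u|\ge 5$, giving the $E^{2m}$ term; (iii) $|t_1-u|$ large but $|u|\le 5$, giving the $E^{2m-1}$ term. The constraint $2m\ge k+1$ is used precisely so that the inner H\"older step on $(\int_{\mathcal D}|L|)^{2m-k}$ is legitimate.
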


We apply Proposition \ref{t3prop} to see that for any integer $k \geq 1$ and any real numbers $2m \geq k+1, \varepsilon>0$, 
\begin{align*}
  \sum_{n\leq  \log X+1} & \frac{n^{2m-1+\varepsilon} }{e^{2nm} }\sumstar_{\substack{d \leq X \\ (d,2)=1}} \bigg( \int_{e^{n-1}-1}^{e^{n}-1 } |L(1/2+it,\chi^{(8d)}) | dt \bigg)^{2m}
    \\
     \ll & X\sum_{n\leq  \log X+1} \frac{n^{2m-1+\varepsilon} }{e^{2nm} } \\
    & \times 
     \Big((\log X)^{2m^2-m+1}e^{kn}(\log 2n)^{O(1)}+(\log X)^{(2m-k)^2/4+1}(\log 2n)^{O(1)}(\log\log X)^{O(1)} e^{2mn} \\
    & \hspace*{2cm} +(\log X)^{2m^2-2mk+3k^2/4+m-3k/4}(\log 2n)^{O(1)}(\log\log X)^{O(1)} e^{(2m-1)n}\Big)  \ll X(\log X)^{E(m,k,\varepsilon)}.
\end{align*}

We now deduce from the above that \eqref{Lsmoothest} holds, completing the proof of the Lemma~\ref{Lsmooth}.

\begin{proof}[Proof of Proposition~\ref{t3prop}]
 We have by symmetry that for each $d$,
\begin{align}
\label{Lintdecomp}
    \bigg(\int\limits_{0}^{E} |L(\tfrac{1}{2}+ it, \chi^{(8d)})| \dif t\bigg)^{2m}
      \ll \int\limits_{[0,E]^k}\prod_{a=1}^k|L(1/2+ it_a, \chi^{(8d)})| \bigg(\int_{\mathcal{D} }|L(1/2+iu, \chi^{(8d)})| \dif u \bigg)^{2m-k} \dif \mathbf{t},
\end{align}
where $\mathcal{D}=\mathcal{D}(t_1,\ldots,t_k)=\{ u\in [0,E]:|t_1-u|\leq |t_2-u|\leq \ldots \leq |t_k-u| \}$. \newline

  We set $\mathcal{B}_1=\big[-\frac{1}{\log X},\frac{1}{\log X}\big]$, $\mathcal{B}_j=\big[-\frac{e^{j-1}}{\log X}, -\frac{e^{j-2}}{\log X}\big]
  \cup \big[\frac{e^{j-2}}{\log X}, \frac{e^{j-1}}{\log X}\big]$ for $2\leq j< \lfloor \log \log X\rfloor+10 =: K$ and $\mathcal{B}_K=[-E,E]\setminus \bigcup_{1\leq j<K} \mathcal{B}_j$. \newline

Observe that for any $t_1\in [0,E]$,  we have $\mathcal{D}\subset [0,E] \subset t_1+[-E,E]\subset \bigcup_{1\leq j\leq K} t_1+\mathcal{B}_j$. Thus
if we denote $\mathcal{A}_j=\mathcal{B}_j\cap (-t_1+\mathcal{D})$, then $(t_1+\mathcal{A}_j)_{1\leq j\leq K}$ form a partition of $\mathcal{D}$.  Using Hölder's inequality twice to deduce that
\begin{align}
\label{LintoverD}
\begin{split}
    & \bigg(\int\limits_{\mathcal{D}}|L(\tfrac{1}{2} + iu, \chi^{(8d)})| \dif u\bigg)^{2m-k}  \leq \bigg( \sum_{1\leq j\leq K} \frac{1}{j}\cdot  j \int\limits_{t_1+\mathcal{A}_j} |L( \tfrac{1}{2}+iu, \chi^{(8d)})| \dif u  \bigg)^{2m-k} \\
     \leq & \bigg(\sum_{1\leq j\leq K} j^{2m-k} \bigg( \int\limits_{t_1+\mathcal{A}_j} \big|L( \tfrac{1}{2}+iu, \chi^{(8d)})\big| \dif u  \bigg)^{2m-k}\bigg)
     \bigg(\sum_{1\leq j\leq K } j^{-(2m-k)/(2m-k-1)} \bigg)^{2m-k-1} \\
     \ll & \sum_{1\leq j\leq K} j^{2m-k} \bigg( \int\limits_{t_1+\mathcal{A}_j} |L( \tfrac{1}{2}+iu, \chi^{(8d)})| \dif u \bigg)^{2m-k} \leq \sum_{1\leq j\leq K} j^{2m-k} |\mathcal{B}_j|^{2m-k-1} \int\limits_{t_1+\mathcal{A}_j} |L(1/2+iu, \chi^{(8d)})|^{2m-k} \dif u.
\end{split}
\end{align}
For $\mathbf{t}=(t_1,\ldots,t_k)$, we write
$$L(\mathbf{t},u)=\sumstar_{\substack{d \leq X \\ (d,2)=1}}\prod_{a=1}^k|L( \tfrac{1}{2}+it_a, \chi^{(8d)})| \cdot |L( \tfrac{1}{2}+iu, \chi^{(8d)})|^{2m-k}.$$
  From \eqref{Lintdecomp} and \eqref{LintoverD}, we deduce
\begin{align}
\label{Lintest}
\begin{split}
    \sumstar_{\substack{d \leq X \\ (d,2)=1}}\bigg(\int\limits_{0}^{E}|L(1/2+it,\chi^{(8d)})|\dif t \bigg)^{2m}\ll &
    \sum_{1\leq l_0\leq K} l_0^{2m-k} |\mathcal{B}_{l_0}|^{2m-k-1} \int\limits_{[0,E]^k}\int\limits_{t_1+\mathcal{A}_{l_0}} L(\mathbf{t},u) \dif u \dif \mathbf{t}  \\
     \ll & \sum_{1\leq l_0, l_1, \ldots l_{k-1}\leq K} l_0^{2m-k} |\mathcal{B}_{l_0}|^{2m-k-1} \int\limits_{\mathcal{C}_{l_0,l_1, \cdots, l_{k-1}}} L(\mathbf{t},u) \dif u \dif \mathbf{t},
\end{split}
\end{align}
where
$$\mathcal{C}_{l_0,l_1, \cdots, l_{k-1}}=\{(t_1,\ldots,t_k,u)\in [0,E]^{k+1}: u\in t_1+ \mathcal{A}_{l_0},\, |t_{i+1}-u|-|t_i-u|\in \mathcal{B}_{l_i}, \ 1 \leq i \leq k-1\}.$$
We now separate two cases in the last summation of \eqref{Lintest} according to the size of $l_0$. \newline

\textbf{Case 1:} $l_0<K$. First, the volume of the region $\mathcal{C}_{l_0,l_1, \cdots, l_{k-1}}$ is $\ll  E^k e^{l_0+l_1+\cdots+l_{k-1}} (\log X)^{-k}$. Also,
by the definition of $\mathcal{C}_{l_0,l_1, \cdots, l_{k-1}}$ and the observation that $t_i, u \geq 0, 1\leq i \leq k$, $e^{l_0}/\log X \ll |t_1-u|\ll |t_1+u|\ll E =X^{O(1)}$ so that $g(|t_1\pm u|)\ll \log X \cdot \log \log E/e^{l_0}$, where we recall the definition of $g$ in \eqref{gDef}.  We deduce from the definition of $\mathcal{A}_j$ that $|t_2-u|\geq |t_1-u|$, so that $E \gg |t_2+u| \gg |t_2-u|= |t_1-u|+(|t_2-u|-|t_1-u|)\gg  e^{l_0}/\log X + e^{l_1}/\log X$, which implies that $g(|t_2\pm u|)\ll \log X \cdot \log \log E/e^{\max(l_0,l_1) }$.
Similarly, we have  $g(|t_i\pm u|)\ll \log X \cdot \log \log E /e^{\max(l_0,l_1,\ldots, l_{i-1}) }$ for any $1 \leq i \leq k$. 
Moreover, we have $\sum^{j-1}_{s=i}(|t_{s+1}-u|-|t_s-u|) \leq |t_j-t_i| \leq |t_j+t_i|$ for any $1 \leq i < j \leq k$, so that we have $g(|t_{j}\pm t_i|)\ll \log X \cdot \log \log E/e^{\max(l_i,\ldots, l_{j-1} ) }$.
We also bound $g(|2t_i|), 1\leq i \leq k$ and $g(|2u|)$ trivially by $\log X$ to deduce from Corollary \ref{cor1} that for $(t_1,\ldots,t_k,u)\in \mathcal{C}_{l_0,l_1, \cdots, l_{k-1}}$,
\begin{align*}
     & L(\mathbf{t},u) \\
     \ll & X(\log X)^{((2m-k)^2+k)/4+(2m-k)^2/4+(2m-k)/2+3k/4}(\log \log E)^{O(1)}
     \bigg(\prod^{k-1}_{i=0}\frac{\log X}{e^{ \max(l_0,l_1,\ldots, l_{i}) }} \bigg)^{2m-k}
     \bigg(\prod^{k-1}_{i=1} \prod^{k}_{j=i+1}\frac{\log X}{e^{\max(l_i,\ldots, l_{j-1} ) } } \bigg) \\
     = & X(\log X)^{m(2m+1)}(\log \log E)^{O(1)}  \exp\Big( -(2m-k)\sum^{k-1}_{i=0}\max(l_0,l_1,\ldots, l_{i})-\sum^{k-1}_{i=1} \sum^{k}_{j=i+1}\max(l_i,\ldots, l_{j-1} )\Big).
\end{align*}

 Moreover, $ |\mathcal{B}_{l_0}|\ll e^{l_0}/\log X$, so that
\begin{align}
\label{firstcase}
\begin{split}
       &  \sum_{\substack{1\leq l_0<K \\ 1\leq l_1, \ldots l_{k-1}\leq K}}  l_0^{2m-k} |\mathcal{B}_{l_0}|^{2m-k-1} \int\limits_{\mathcal{C}_{l_0,l_1, \cdots, l_{k-1}}} L(\mathbf{t},u) \dif u \dif \mathbf{t} \\
    \ll & X(\log X)^{2m^2-m+1}E^k(\log \log E)^{O(1)}  \cdot \\
    &  \times \sum_{\substack{1\leq l_0<K \\ 1\leq l_1, \ldots l_{k-1}\leq K}}  l_0^{2m-k}\exp\Big( (2m-k-1)l_0+\sum^{k-1}_{i=0}l_i-(2m-k)\sum^{k-1}_{i=0}\max(l_0,l_1,\ldots, l_{i})-\sum^{k-1}_{i=1} \sum^{k}_{j=i+1}\max(l_i,\ldots, l_{j-1} )\Big) \\
    = & X(\log X)^{2m^2-m+1}E^k(\log \log E)^{O(1)}  \cdot \\
    &  \times \sum_{\substack{1\leq l_0<K \\ 1\leq l_1, \ldots l_{k-1}\leq K}}  l_0^{2m-k}\exp\Big( -(2m-k)\sum^{k-1}_{i=1}\max(l_0,l_1,\ldots, l_{i})-\sum^{k-1}_{i=1} \sum^{k}_{j=i+2}\max(l_i,\ldots, l_{j-1} )\Big) \\
    \ll &   X(\log X)^{2m^2-m+1}E^k(\log \log E)^{O(1)} \sum_{\substack{1\leq l_0<K \\ 1\leq l_1, \ldots l_{k-1}\leq K}}  l_0^{2m-k}\exp\Big( -(2m-k)\sum^{k-1}_{i=1}\frac {\sum^{i}_{s=0}l_s}{i+1}-\sum^{k-1}_{i=1} \sum^{k}_{j=i+2}\frac {\sum^{j-1}_{s=i}l_s}{j-i}\Big)  \\
    \ll &   X(\log X)^{2m^2-m+1}E^k(\log \log E)^{O(1)}.
\end{split}
\end{align}

\textbf{Case 2} $l_0=K$.  For each $1 \leq i \leq k$, 
we have $g(|t_i\pm u|)\ll \log\log  E$. Also, similar to Case 1, we have $g(|t_j\pm t_i|) \ll \log X/e^{\max(l_i,\ldots, l_{j-1} ) }$ for $1 \leq i \leq k$. As $|t_1-u| \leq |t_1|+|u|$, we see that either $|u| \geq 5$ or $|u| \leq 5$. If $|u| \geq 5$, then we have $g(|2u|) \ll \log\log  E$. We further use the trivial bound $g(|2t_i|) \ll \log X$ with $1 \leq i \leq k$, obtaining
\begin{align*}
     L(\mathbf{t},u) \ll & X(\log X)^{((2m-k)^2+k)/4+3k/4}(\log \log E)^{O(1)}
     \bigg(\prod^{k-1}_{i=1} \prod^{k}_{j=i+1}\frac{\log X}{e^{\max(l_i,\ldots, l_{j-1} ) } } \bigg) \\
     = & X(\log X)^{(2m-k)^2/4+k}(\log \log E)^{O(1)}  \exp\Big( -\sum^{k-1}_{i=1} \sum^{k}_{j=i+1}\max(l_i,\ldots, l_{j-1} )\Big).
\end{align*}

    If $|u| \leq 5$, then using $|t_1-u| \leq |t_i-u| \leq |t_i|+|u|$ for any $1 \leq i \leq k$ we see that $|t_i| \geq 5$ so that $g(|2t_i|) \ll \log\log  E$.  Again the trivial bound of $g(|2u|) \ll \log X$ yields
\begin{align*}
 L(\mathbf{t},u) \ll & X(\log X)^{((2m-k)^2+k)/4+(2m-k)^2/4+(2m-k)/2}(\log \log E)^{O(1)}
     \bigg(\prod^{k-1}_{i=1} \prod^{k}_{j=i+1}\frac{\log X}{e^{\max(l_i,\ldots, l_{j-1} ) } } \bigg) \\
     = & X(\log X)^{2m^2-2mk+3k^2/4+m-3k/4}(\log \log E)^{O(1)}  \exp\Big( -\sum^{k-1}_{i=1} \sum^{k}_{j=i+1}\max(l_i,\ldots, l_{j-1} )\Big).
\end{align*}
The volume of the region $\mathcal{C}_{K,l_1, \cdots, l_{k-1}}$ is $\ll E^{k+1} e^{l_1+\cdots+l_{k-1}} (\log X)^{-k+1}$.  As $|\mathcal{B}_K|\ll E$, we deduce from the above that
\begin{align}
\label{secondcase}
\begin{split}
     \sum_{\substack{1\leq l_1, \ldots l_{k-1}\leq K}}  K^{2m-k} & |\mathcal{B}_{K}|^{2m-k-1} \int\limits_{\substack{\mathcal{C}_{K,l_1, \cdots, l_{k-1}} \\ |u| \geq 5}} L(\mathbf{t},u) \dif u \dif \mathbf{t}  \\
     \ll & X(\log X)^{(2m-k)^2/4+1}E^{2m}(\log \log E)^{O(1)}(\log\log X)^{O(1)} \\
     & \hspace*{2cm} \times \sum_{\substack{1\leq l_1, \ldots l_{k-1}\leq K}} \exp\Big( \sum^{k-1}_{i=1}l_i-\sum^{k-1}_{i=1} \sum^{k}_{j=i+1}\max(l_i,\ldots, l_{j-1} )\Big) \\
     \ll &   X(\log X)^{(2m-k)^2/4+1}E^{2m}(\log \log E)^{O(1)}(\log\log X)^{O(1)}.
\end{split}
\end{align}

   As we have $|t_i| \geq 5$ for any $1 \leq i \leq k$ when $|u| \leq 5$, we see that the volume of the region $\mathcal{C}_{K,l_1, \cdots, l_{k-1}}$ when $|u| \leq 5$ is $\ll E^{k}$. It follows that 
\begin{align}
\label{secondcaseusmall}
\begin{split}
 \sum_{\substack{1\leq l_1, \ldots l_{k-1}\leq K}}  K^{2m-k} & |\mathcal{B}_{K}|^{2m-k-1} \int\limits_{\substack{\mathcal{C}_{K,l_1, \cdots, l_{k-1}} \\ |u| \leq 5}} L(\mathbf{t},u) \dif u \dif \mathbf{t}  \\
     \ll & X(\log X)^{2m^2-2mk+3k^2/4+m-3k/4}E^{2m-1}(\log \log E)^{O(1)}(\log\log X)^{O(1)} \\
     & \hspace*{2cm} \times \sum_{\substack{1\leq l_1, \ldots l_{k-1}\leq K}} \exp\Big( -\sum^{k-1}_{i=1} \sum^{k}_{j=i+1}\max(l_i,\ldots, l_{j-1} )\Big) \\
     \ll &   X(\log X)^{2m^2-2mk+3k^2/4+m-3k/4}E^{2m-1}(\log \log E)^{O(1)}(\log\log X)^{O(1)}.
\end{split}
\end{align}

   We now deduce the estimation in \eqref{finiteintest} using \eqref{firstcase}--\eqref{secondcaseusmall}. This completes the proof of the proposition.
\end{proof}

\section{Proof of Lemma \ref{fdiff}}

We apply the Cauchy-Schwarz inequality to see that
\begin{align}
\label{pocs1}
\begin{split}
   \sumstar_{\substack{d \leq X \\ (d,2)=1}} & \bigg|\sum_{n\leq Y} \chi^{(8d)}(n)\big (1-\Phi_U(\frac {n}{Y})\big )\bigg|^{2m} \\
    \leq & \bigg(\sumstar_{\substack{d \leq X \\ (d,2)=1}}\bigg|\sum_{n\leq Y} \chi^{(8d)}(n)\Big (1-\Phi_U \Big( \frac {n}{Y} \Big) \Big )\bigg|^{2}\bigg)^{1/2}
    \bigg(\sumstar_{\substack{d \leq X \\ (d,2)=1}}\bigg|\sum_{n\leq Y} \chi^{(8d)}(n)\Big (1-\Phi_U \Big( \frac {n}{Y} \Big)\Big )\bigg|^{4m-2}\bigg)^{1/2}.
\end{split}
\end{align}
It follows from \cite[Corollary 2]{HB1} that for arbitrary complex numbers $a_n$, for $X, Z \geq 2$ and any
   $\varepsilon>0$, we have
\begin{align*}
\begin{split}
  &  \sumstar_{\substack{d \leq X \\ (d,2)=1}}\bigg|\sum_{n\leq Z} a_n\chi^{(8d)}(n)\bigg|^{2} \ll  (XZ)^{\varepsilon}(X+Z)\sum_{\substack{m,n \leq Z \\ mn=\square}}|a_ma_n|.
\end{split}
\end{align*}

The above with $Z=Y$, mindful of our assumption that $Y \leq X$, leads to
\begin{align*}
\begin{split}
  \sumstar_{\substack{d \leq X \\ (d,2)=1}}\bigg|\sum_{n\leq Y} \chi^{(8d)}(n) \Big (1-\Phi_U \Big( \frac {n}{Y} \Big)\Big ) \bigg|^{2} \ll & (XY)^{\varepsilon}(X+Y)\sum_{\substack{n_1, n_2 \leq Y \\ n_1n_2=\square} }\Big (1-\Phi_U \Big( \frac {n_1}{Y} \Big)\Big )\Big (1-\Phi_U \Big(\frac {n_2}{Y} \Big) \Big ) \\
  \ll &
  X^{1+\varepsilon}\sum_{\substack{Y(1-1/U) \leq n_1, n_2 \leq Y \\ n_1n_2=\square} }1 .
\end{split}
\end{align*}
  We evaluate the last expression above by writing $n_1=d_1m^2_1, n_2=d_1m^2_2$ with $d_1$ square.  The above is
\begin{align}
\label{pocs3}
\begin{split}
  \ll X^{1+\varepsilon}\sum_{\substack{d_1 \leq Y}}\sum_{\substack{(Y(1-1/U)/d_1)^{1/2} \leq m_1, m_2 \leq (Y/d_1)^{1/2}}}1 \ll & X^{1+\varepsilon}\sum_{\substack{d_1 \leq Y}} \Big((Y/d_1)^{1/2}-(Y(1-1/U)/d_1)^{1/2}\Big)^2 \\
   \ll & X^{1+\varepsilon}\sum_{\substack{d_1 \leq Y}} \frac {Y}{d_1U^2} \ll X^{1+\varepsilon}YU^{-2}\log Y,
\end{split}
\end{align}
where the penultimate bound comes by virtue of the mean value theorem. \newline

Next note that
\begin{equation}
\label{pocs4}
\sumstar_{\substack{d \leq X \\ (d,2)=1}}\bigg|\sum_{n\leq Y} \chi^{(8d)}(n)\Big (1-\Phi_U \Big( \frac {n}{Y} \Big)\Big )\bigg|^{4m-2} \ll \sumstar_{\substack{d \leq X \\ (d,2)=1}}\bigg|\sum_{n\leq Y} \chi^{(8d)}(n)\bigg|^{4m-2}+\sumstar_{\substack{d \leq X \\ (d,2)=1}}\bigg|\sum_{n\leq Y} \chi^{(8d)}(n)\Phi_U \Big( \frac {n}{Y} \Big)\bigg|^{4m-2}.
\end{equation}

 We deduce by arguing similar to those from \eqref{charint}--\eqref{Ssimplified} that 
\begin{align*}
\begin{split}
\sumstar_{\substack{d \leq X \\ (d,2)=1}}\bigg|\sum_{n\leq Y} \chi^{(8d)}(n)\Phi_U \Big( \frac {n}{Y} \Big)\bigg|^{4m-2} 
   \ll & Y^{2m-1} \sumstar_{\substack{d \leq X \\ (d,2)=1}}
   \Big | \int\limits_{\substack{ (1/2) \\ |t| \leq X^{\varepsilon}}}\Big |L( \tfrac{1}{2}+it, \chi^{(8d)})\Big |\frac 1{1+|t|} \dif t\Big |^{2m}
   +O(XY^{2m-1}).
\end{split}
\end{align*}
  We then deduce from the above and Lemma \ref{Lsmooth} that 
\begin{equation}
\label{pocs6}
 \sumstar_{\substack{d \leq X \\ (d,2)=1}}\bigg|\sum_{n\leq Y} \chi^{(8d)}(n)\Phi_U \Big( \frac {n}{Y} \Big)\bigg|^{4m-2} \ll XY^{2m-1}(\log X)^{O(1)}.
\end{equation}  

 To estimate the first expression on the right-hand side of \eqref{pocs4}, we apply Perron's formula as given in \cite[Corollary 5.3]{MVa1}.  Hence
\begin{align}
\label{Perron}
\begin{split}
    \sum_{n\leq Y}\chi^{(8d)}(n)= & \frac 1{ 2\pi i}\int\limits_{1+1/\log Y-iY}^{1+1/\log Y+iY}L(s,\chi^{(8d)}) \frac{Y^s}{s} \dif s +R_1+R_2, \\
     = &\frac 1{ 2\pi i}\int\limits_{1/2-iY}^{1/2+iY} +\frac 1{ 2\pi i}\int\limits_{1+1/\log Y -iY}^{1/2-iY}+\frac 1{ 2\pi i}\int\limits_{1/2+iY}^{1+1/\log Y+iY} L(s,\chi^{(8d)})\frac{Y^s}{s}\dif s +R_1+R_2,
\end{split}
\end{align}
  where
\begin{align}
\label{R12}
\begin{split}
  R_1 \ll \sum_{\substack{Y/2< n <2Y  \\  n \neq Y  }}\min \left( 1, \frac {1}{|n-Y|} \right)  \ll \log Y \quad \mbox{and} \quad R_2 \ll \frac {4^{1+1/\log Y}+Y^{1+1/\log Y}}{Y}\zeta(1+1/\log Y)\ll \log Y.
\end{split}
\end{align}
  Here the last estimation above follows from \cite[Corollary 1.17]{MVa1}.
We now consider the moments of the horizontal integrals in \eqref{Perron}. We may assume that $Y\geq 10$, otherwise the lemma is trivial. 
By symmetry we only need to consider only one of them. Note that we have $|Y^s/s| \leq Y^{1+1/\log Y}/Y \ll 1$ in that range and $m \geq 1$, which allows us to apply Hölder's inequality to get
\begin{align}
\label{horizontalint}
\begin{split}
 \sumstar_{\substack{d \leq X \\ (d,2)=1}}\bigg| \int\limits_{1/2+iY}^{1+1/\log Y+iY} L(s,\chi^{(8d)})\frac{Y^s}{s} \dif s\bigg|^{4m-2}\ll & \sumstar_{\substack{d \leq X \\ (d,2)=1}}\bigg( \int\limits_{1/2+iY}^{1+1/\log Y+iY} |L(s,\chi^{(8d)})| |\dif s| \bigg)^{4m-2} \\
 \ll &  \sumstar_{\substack{d \leq X \\ (d,2)=1}}\int\limits_{1/2+iY}^{1+1/\log Y+iY} |L(s,\chi^{(8d)})|^{4m-2} |\dif s| \ll X(\log X)^{O(1)}.
\end{split}
\end{align}
To get the last bound above, we utilize Lemma \ref{prop: upperbound}, which gives that for $ 1/2 \leq \Re(s) \leq 1+1/\log Y$, under GRH, 
$$\sumstar_{\substack{d \leq X \\ (d,2)=1}}|L(s,\chi^{(8d)})|^{4m-2} \ll X(\log X)^{O(1)}.$$

  We treat the vertical integral in \eqref{Perron} using Hölder's inequality, Proposition \ref{t3prop} with $k=1$ and the assumption $Y \leq X$.  Thus
\begin{align}
\label{verticalint}
\begin{split}
  \sumstar_{\substack{d \leq X \\ (d,2)=1}}\bigg|\int\limits_{1/2-iY}^{1/2+iY}L(s,\chi^{(8d)}) \frac{Y^s}{s} \dif s\bigg|^{4m-2} \ll & Y^{2m-1}\sumstar_{\substack{d \leq X \\ (d,2)=1}} \bigg( \int\limits_{0}^Y \frac{|L( \tfrac{1}{2}+it,\chi^{(8d)}) |}{t+1} \dif t \bigg)^{4m-2}  \\
   \ll &  Y^{2m-1}\sum_{n\leq \log Y+2} \frac{n^{4m-2} }{e^{(4m-2)n}} \sumstar_{\substack{d \leq X \\ (d,2)=1}} \bigg( \int\limits_{e^{n-1}-1}^{e^{n}-1 } |L( \tfrac{1}{2}+it,\chi^{(8d)}) | \dif t \bigg)^{4m-2} \\
  \ll & Y^{2m-1}X(\log X)^{O(1)}\Big ( \sum_{n\leq \log Y+2}\frac{n^{4m-2}}{e^{(4m-2)n} }e^n +\sum_{n\leq \log Y+2}n^{4m-2} \Big )\\
  \ll & Y^{2m-1}X(\log X)^{O(1)}.
\end{split}
\end{align}
From \eqref{Perron}--\eqref{verticalint}, we infer
\begin{equation}
\label{pocs10}
   \sumstar_{\substack{d \leq X \\ (d,2)=1}}\bigg|\sum_{n\leq Y} \chi^{(8d)}(n)\bigg|^{4m-2}\ll Y^{2m-1}X(\log X)^{O(1)}.
\end{equation}
Finally, from \eqref{pocs1},  \eqref{pocs3}, \eqref{pocs6}, \eqref{pocs10} and $U=X^{2\varepsilon}, Y \leq X$, \eqref{theorem3firstrest} is valid.  This completes the proof of the lemma.

\vspace*{.5cm}

\noindent{\bf Acknowledgments.}  P. G. is supported in part by NSFC grant 12471003 and L. Z. by the FRG Grant PS71536 at the University of New South Wales.  The authors would like to thank the anonymous referee for his/her very careful inspection of the paper and many valuable suggestions.

\bibliography{biblio}
\bibliographystyle{amsxport}

\end{document}